\newcommand{\e}{{\mathrm{e}}}
\newcommand{\ii}{{\mathrm{i}}}
\newcommand{\R}{{\mathbb{R}}}
\newcommand{\deltab}{\boldsymbol\delta}
\newcommand{\sigmab}{\boldsymbol\sigma}
\newcommand{\bm}[1]{\mathbf{#1}}
\DeclareMathOperator{\Real}{Re}
\DeclareMathOperator{\Imag}{Im}
\numberwithin{equation}{section}
\newtheorem{remark}{Remark}[section]
\begin{document}

\title{Regularized solution of a nonlinear problem in electromagnetic
sounding}
\author{Gian Piero Deidda\thanks{Dipartimento di Ingegneria Civile, Ambientale
e Architettura, Universit\`a di Cagliari, Piazza d'Armi 1, 09123 Cagliari,
Italy. E-mail: \texttt{gpdeidda@unica.it}.} \and
Caterina Fenu\thanks{Dipartimento di Matematica e Informatica, Universit\`a di
Cagliari, viale Merello 92, 09123 Cagliari, Italy. E-mail:
\texttt{kate.fenu@unica.it}, \texttt{rodriguez@unica.it}.} \and
Giuseppe Rodriguez\footnotemark[2]}


\maketitle





\section{Introduction}\label{sec:intro}

Electromagnetic induction measurements are often used for non-destructive
investigation of certain soil properties, which are affected by
the electromagnetic features of the subsurface layers, e.g.,
the electrical conductivity and the magnetic permeability.
Knowing such parameters allows one to identify inhomogeneities in the ground,
and to ascertain the presence and the spatial position of particular conductive
substances, such as metals, liquid pollutants, or saline water.
This leads to important applications in 
Geophysics \cite{callegary2007,fraser2007,martinelli2010,vanderkruk2000}, 
Hydrology, \cite{lesch1995,paine2003}, 
Agriculture \cite{corwin2005,gebbers2007,yao2010}, 
etc.

A ground conductivity meter (GCM) is a rather common device for electromagnetic
sounding, initially introduced by the Geonics company.
It is composed by two coils (a transmitter and a receiver) placed at the
extrema of a bar.
An alternating current in the transmitter coil produces a primary magnetic
field $H_P$, which induces small currents in the ground.
These currents produce a secondary magnetic field $H_S$, which is sensed by the
receiver coil.
A GCM has two operating positions, which produce different measures,
corresponding to the orientation (either vertical or horizontal) of the
electric dipole generated by the transmitter coil; see Figure~\ref{fig:em38}.
The instrument is often coupled to a GPS, so that it is possible to associate
to each measurement the geographical position where it was taken.
Its success is due to ease of use and a relatively low price.

\begin{figure}[htb]
\begin{center}
\centerline{\includegraphics[scale=0.35]{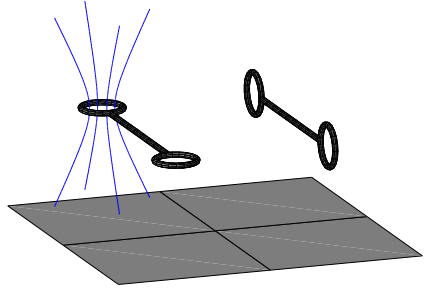}}
\end{center}
\caption{Schematic representation of a ground conductivity meter (GCM).}
\label{fig:em38}
\end{figure}

Let us assume that the instrument is placed at ground level in vertical
orientation, the soil has uniform magnetic permeability
$\mu_0=4\pi10^{-7}\,\textrm{H/m}$ (the permeability of free space) and uniform
electrical conductivity $\sigma$.
Moreover, let the \emph{induction number} be small
\begin{equation}
B = \frac{r}{\delta} = r \sqrt{\frac{\mu_0\omega\sigma}{2}} \ll 1,
\label{indnum}
\end{equation}
where
$\delta$ is the \emph{skin depth} (the depth at which the principal field $H_P$
has been attenuated by a factor $\e^{-1}$), $r$ is the inter-coil distance,
$\omega=2\pi f$, and $f$ is operating frequency of the device. 
In the case of the Geonics EM38 device, $r=1\,\textrm{m}$, 
$f=14.6\,\textrm{kHz}$, and $\delta=10\sim 50\,\textrm{m}$.
A GCM measures the apparent conductivity
\begin{equation}
m = \frac{4}{\mu_0\omega r^2}\Imag\left(\frac{(H_S)_d}{(H_P)_d}\right),
\label{appcond}
\end{equation}
which coincides with $\sigma$ under the above restrictive assumptions,
where $(H_P)_d$ and $(H_S)_d$ are the components along the dipole axis of the
primary and secondary magnetic field, respectively.

In real applications the assumption of uniform soil conductivity is not
realistic.  On the contrary, it is particularly interesting to investigate non
homogeneous soils, where the electrical conductivity $\sigma$ is not constant
and the magnetic permeability $\mu$ may be very different from $\mu_0$ for the
presence of ferromagnetic materials.

Apparent conductivity gives no information on the depth localization
of inhomogeneities.
To recover the distribution of conductivity with respect to depth by data
inversion, multiple measures are needed.
Different measures can be generated by varying some of the parameters which
influence the response of the device.
As suggested in \cite{borch97}, we assume to place the instrument at different
heights over the ground and to repeat the induction measurement with both the
possible orientations.

In 1980, McNeill \cite{mcneill80} described a linear model, based on the
response curves in the vertical and horizontal positions of the device, which
relates the apparent conductivity to the height over the ground.
If $m^V(h)$ and $m^H(h)$ are the apparent conductivity measured by the GCM at
height $h$, in the vertical and horizontal orientation, respectively, then
$$
\begin{aligned}
m^V(h) &= \int_0^\infty \phi^V(h+z) \sigma(z) \,dz, \\
m^H(h) &= \int_0^\infty \phi^H(h+z) \sigma(z) \,dz,
\end{aligned}
$$
where $z$ is the ratio between the depth and the inter-coil distance $r$,
$\sigma(z)$ is the conductivity at $z$, and
$$
\phi^V(z) = \frac{4z}{(4z^2+1)^{3/2}}, \qquad
\phi^H(z) = 2 - \frac{4z}{(4z^2+1)^{1/2}}.
$$

The linear model is valid for uniform magnetic permeability $\mu_0$, small
\emph{induction number} $B$, and moderate conductivity ($\sigma\lesssim
100\,\textrm{mS/m}$).
This model is not accurate when the conductivity of some subsurface layers is
large. In this case a nonlinear model is available \cite{hendr02,wait82}, which
will be described in the next section.

The two models are analyzed in \cite{borch97,hendr02}.
One of the conclusions is that, even if the nonlinear model produces better
results when the electrical conductivity is large, ``the linear model is
preferred for all conductivities since it needs considerably less computer
resources''.
The same authors made available two Matlab packages for inversion, based on the
linear and the nonlinear models, respectively; see \cite{borch97,hendr02}.
An algorithm for the solution of the linear model based on Tikhonov
regularization has been analyzed in \cite{deidda03}.

In this paper we propose a regularized inversion procedure for the nonlinear
model, based on the coupling of the damped Gauss--Newton method with truncated
singular value decomposition (TSVD). We give an explicit representation of the
Jacobian of the nonlinear function defining the model, and show that the 
computational load required by the algorithm is not large, and allows real-time
processing. 
For this reason we think that our approach is competitive with the existing
ones, and can be effectively used in the presence of highly conductive
materials.

The plan of the paper is the following: in Section~\ref{sec:nonlin} we describe
a nonlinear model which connects the real conductivity of the soil layers to
the apparent conductivity, and in Section~\ref{sec:jacob} we compute the
Jacobian matrix of the model.
The inversion algorithm is introduced in Section~\ref{sec:invalgo}, while
Section~\ref{sec:regul} describes the regularization procedure adopted in the
inversion algorithm.
Finally, Section~\ref{sec:numex} reports the result of numerical experiments
performed both on synthetic and real data.

\section{The nonlinear model}\label{sec:nonlin}

A nonlinear model which relates the electromagnetic features of the soil to the
height of measurement is described in \cite{wait82}, and it is further analyzed
and adapted to the case of a GCM in \cite{hendr02}.
The model is derived from Maxwell's equations, keeping into account the
cylindrical symmetry of the problem, due to the fact that the magnetic field
sensed by the receiver coil is independent of the rotation of the instrument
around the vertical axis.
In the following, $\lambda$ is a variable of integration which has no
particular physical meaning. It can be interpreted as the ratio between a
length and the \emph{skin depth} $\delta$.

Following \cite[Chapter III]{wait82}, we assume that the soil has a layered
structure with $n$ layers, each of thickness $d_i$, $i=1,\dots,n$. The bottom
layer $d_n$ is assumed to be of infinite width.
Let $\sigma_k$ and $\mu_k$ be the electrical conductivity and
the magnetic permeability of the $k$-th layer, respectively, and let
$u_k(\lambda) = \sqrt{\lambda^2 + \ii\sigma_k\mu_k\omega}$,
where $\ii=\sqrt{-1}$ is the imaginary unit.  
Then, the characteristic admittance of the $k$-th layer is given by
\begin{equation}\label{charadm}
N_k(\lambda) = \frac{u_k(\lambda)}{\ii\mu_k\omega}.
\end{equation}
The surface admittance at the top of the $k$-th layer is denoted by
$Y_k(\lambda)$ and verifies the following recursion 
\begin{equation}\label{surfadm}
Y_k(\lambda) = N_k(\lambda)\frac{Y_{k+1}(\lambda)+N_k(\lambda)
\tanh(d_k u_k(\lambda))}{N_k(\lambda) + Y_{k+1}(\lambda)
\tanh(d_k u_k(\lambda))},
\quad k=n-1,\ldots,1,
\end{equation}
where $d_k$ is the width of the $k$th layer.
The recursion is initialized setting $Y_n(\lambda)=N_n(\lambda)$ at the lowest
layer. Numerically, this is equivalent to start the recursion at $k=n$ with
$Y_{n+1}(\lambda)=0$.

Now let,
\begin{equation}
R_0(\lambda) = \frac{N_0(\lambda) - Y_1(\lambda)}{N_0(\lambda) + Y_1(\lambda)},
\label{reflfact}
\end{equation}
and
\begin{equation}
\begin{aligned}
T_0(h) &= -\delta^3 \int_{0}^{\infty}
\lambda^2 e^{-2h\lambda} R_0(\lambda) 
J_0(r\lambda) \,d\lambda, \\
T_2(h) &= -\delta^2 \int_{0}^{\infty}
\lambda e^{-2h\lambda} R_0(\lambda) 
J_1(r\lambda) \,d\lambda, \\
\end{aligned}
\label{T0T2}
\end{equation}
where $J_0(\lambda)$ and $J_1(\lambda)$ are Bessel functions of the first kind
of order 0 and 1, respectively, and $r$ is the inter-coil distance.
We prefer to express the integrals \eqref{T0T2} in the variable $\lambda$,
instead than $g=\delta\lambda$ as in \cite{wait82}.
The results obtained by Wait in \cite[page 113]{wait82}, adapted to the
geometry of a GCM, give the components of the magnetic field along the dipole
axis
$$
\begin{aligned}
(H_P)_z &=-\frac{C}{r^3}, &\qquad (H_S)_z &=-\frac{C}{\delta^3}T_0(h),
&\qquad & \text{(vertical dipole)}, \\
(H_P)_y &=-\frac{C}{r^3}, &\qquad (H_S)_y &=-\frac{C}{r\delta^2}T_2(h),
&\qquad & \text{(horizontal dipole)},
\end{aligned}
$$
where $C$ is a constant; in the case of a horizontal dipole, we assume its axis
to be $y$-directed.
Substituting in \eqref{appcond}, we obtain the predicted values of the apparent
conductivity measurement $m^V(h)$ (vertical orientation of coils) and $m^H(h)$
(horizontal orientation of coils) at height $h$ above the ground 
$$
\begin{aligned}
m^V(h) &= \frac{4}{\mu_0\omega r^2} \Imag(B^3T_0(h)), \\
m^H(h) &= \frac{4}{\mu_0\omega r^2} \Imag(B^2T_2(h)),
\end{aligned}
$$
where $B$ is the induction number \eqref{indnum}.

Simplifying formulae, we find
\begin{equation}
\begin{aligned}
m^V(h) &= \frac{4r}{\mu_0\omega} \mathcal{H}_0\left[
-\lambda e^{-2h\lambda} \Imag(R_0(\lambda)) \right](r) \\
m^H(h) &= \frac{4}{\mu_0\omega} \mathcal{H}_1\left[
-e^{-2h\lambda} \Imag(R_0(\lambda)) \right](r).
\end{aligned}
\label{mvmh}
\end{equation}
Here we denote by
\begin{equation}
\mathcal{H}_\nu[f](r) = \int_{0}^{\infty} f(\lambda) J_\nu(r\lambda) \lambda
\,d\lambda 
\label{hankel}
\end{equation}
the Hankel transform of order $\nu$ of the function $f(\lambda)$.
In our numerical experiments we approximate $\mathcal{H}_\nu[f](r)$ by the
quadrature formula described in \cite{anderson1979}, using the nodes and
weights adopted in \cite{hendr02}.

\begin{remark}
\rm
The above relations \eqref{mvmh} show that the apparent conductivity predicted
by the model is independent of the \emph{skin depth} $\delta$ and the
\emph{induction number} $B$.
To our knowledge, this is the first time that this is noted.
\end{remark}
\medskip

The model just described depends upon a number of parameters which influence
the value of the apparent conductivity. In particular, it is affected by the
instrument orientation (horizontal/vertical), its height $h$ over the ground,
the inter-coil distance $r$, and the angular frequency $\omega$.

The problem of data inversion is very important in Geophysics, when one 
is interested in depth localization of inhomogeneities of the soil.
To this purpose, multiple measures are needed to recover the distribution of
conductivity with respect to depth. In order to obtain such measures, we use
the two admissible orientations and assume to record apparent conductivity at
height $h_i$, $i=1,\ldots,m$. This generates $2m$ data values.

In our analysis, we let the magnetic permeability take the same value $\mu_0$
in the $n$ layers. This assumption is approximately met if the ground does not
contain ferromagnetic materials.
Then, we can consider the apparent conductivity as a function of the value of
the conductivity $\sigma_k$ in each layer and of the height $h$, and we write
$m^V(\sigmab,h)$ and $m^H(\sigmab,h)$, where
$\sigmab=(\sigma_1,\ldots,\sigma_n)^T$, instead than $m^V(h)$ and $m^H(h)$.

Now, let $b^V_i$ and $b^H_i$ be the data recorded by the GCM at height $h_i$ in
the vertical and horizontal orientation, respectively, and let us
denote by $r_i(\sigmab)$ the error in the model prediction for the $i$th
observation
\begin{equation}
r_i(\sigmab) = \begin{cases}
b^V_i - m^V(\sigmab,h_i), \qquad & i=1,\dots,m, \\
b^H_{m-i} - m^H(\sigmab,h_{m-i}), \qquad & i=m+1,\dots,2m.
\end{cases}
\label{risigma}
\end{equation}
Setting $\bm{b}^V=(b^V_1,\ldots,b^V_m)^T$,
$\bm{m}^V(\sigmab)=(m^V(\sigmab,h_1),\ldots,m^V(\sigmab,h_m))^T$, and defining
$\bm{b}^H$ and $\bm{m}^H(\sigmab)$ similarly, we can write the measured data
vector and the model predictions vector as
\begin{equation}
\bm{b} = \begin{bmatrix} \bm{b}^V \\ \bm{b}^H \end{bmatrix}, \quad
\bm{m}(\sigmab) = \begin{bmatrix} \bm{m}^V(\sigmab,\bm{h}) \\
\bm{m}^H(\sigmab,\bm{h}) \end{bmatrix},
\label{bmsigma}
\end{equation}
and the residual vector as
\begin{equation}
\bm{r}(\sigmab) = \bm{b} - \bm{m}(\sigmab).
\label{rsigma}
\end{equation}

To estimate the computational complexity needed to evaluate $\bm{r}(\sigmab)$
we assume that the complex arithmetic operations are implemented according to
the classical definitions, i.e., that 2 floating point operations
(\emph{flops}) are required for each complex sum, 6 for each product and 11 for
each division.  The count of other functions (exponential, square roots, etc.)
is given separately because it is not clear how many \emph{flops} they require.
If $n$ is the number of layers, $2m$ the number of data values, and $q$ the
nodes in the quadrature formula used to approximate \eqref{hankel}, we obtain a
complexity $O((45n+8m)q)$ \emph{flops} plus $2nq$ evaluations of functions with
a complex argument, and $mq$ with a real argument.

\section{Computing the Jacobian matrix}\label{sec:jacob}

As we will see in the next section, being able to compute or to approximate
the Jacobian matrix $J(\sigmab)$ of the vector function \eqref{rsigma} is
crucial for the implementation of an effective inversion algorithms and to have
information about its speed of convergence and conditioning.

The approach used in \cite{hendr02} is to resort to a finite difference
approximation
\begin{equation}
\frac{\partial r_i(\sigmab)}{\partial \sigma_j} =
\frac{r_i(\sigmab+\deltab_j)-r_i(\sigmab)}{\delta},
\quad i=1,\ldots,2m,\ j=1,\ldots,n,
\label{findiff}
\end{equation}
where $\deltab_j=\delta\,\bm{e}_j=(0,\ldots,0,\delta,0,\ldots,0)^T$ and
$\delta$ is a fixed constant.

In this section we describe the explicit expression of the Jacobian matrix.
We will show that the complexity of this computation is smaller than that
required by the finite difference approximation \eqref{findiff}.
In the following lemma we omit for clarity the variable $\lambda$.

\begin{lemma}\label{lem:derY}
The derivatives  $Y'_{kj} = \frac{\partial Y_k}{\partial \sigma_j}$,
$k,j=1,\ldots,n$, of the surface admittances \eqref{surfadm} can be obtained
starting from 
\begin{equation}
Y'_{nn} = \frac{1}{2u_n}, \qquad  
Y'_{nj} = 0, \quad j=1,\ldots,n-1,
\label{recinit}
\end{equation}
and proceeding recursively for $k=n-1,n-2,\dots,1$ by
\begin{equation}
\begin{aligned}
Y'_{kj} &= N_k^2 b_k Y'_{k+1,j}, \qquad j=n,n-1,\ldots,k+1, \\
Y'_{kk} &= \frac{a_k}{2u_k} 
	+ \frac{b_k}{2} \left[ N_k^2 d_k
	- Y_{k+1}\left(d_k Y_{k+1}
	+ \frac{1}{\ii\mu_k\omega}\right)\right], \\
Y'_{kj} &= 0, \qquad j=k-1,k-2,\ldots,1, \\
\end{aligned}
\label{yprime}
\end{equation}
where
\begin{equation}
a_k = \frac{Y_{k+1}+N_k \tanh(d_k u_k)}{N_k + Y_{k+1} \tanh(d_k u_k)}, \quad
b_k = \frac{1}{[N_k + Y_{k+1} \tanh(d_k u_k)]^2 \cosh^2(d_k u_k)}.
\label{akbk}
\end{equation}
\end{lemma}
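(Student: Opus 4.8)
The plan is to differentiate the recursion \eqref{surfadm} with respect to $\sigma_j$ directly, using the chain rule, and to organize the result so that the only $\sigma_j$-dependence that survives at step $k$ is either through $Y_{k+1}$ (when $j>k$) or through the explicit appearance of $\sigma_k$ in $u_k$, $N_k$, and the hyperbolic functions (when $j=k$); when $j<k$ the quantities $N_k$, $u_k$, $d_k$, and $Y_{k+1}$ are all independent of $\sigma_j$, so $Y'_{kj}=0$ follows immediately by induction, which handles the last line of \eqref{yprime} and the second equation of \eqref{recinit}. The base case $k=n$ is a one-line computation: $Y_n = N_n = u_n/(\ii\mu_n\omega)$ and $u_n=\sqrt{\lambda^2+\ii\sigma_n\mu_n\omega}$, so $\partial Y_n/\partial\sigma_n = (1/(\ii\mu_n\omega))\cdot \ii\mu_n\omega/(2u_n) = 1/(2u_n)$, giving \eqref{recinit}.

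For the inductive step, I would write the recursion \eqref{surfadm} in the compact form $Y_k = N_k\, a_k$ with $a_k$ as defined in \eqref{akbk}, so that $Y'_{kj} = N_k' a_k + N_k a_k'$ where $N_k' = \partial N_k/\partial\sigma_j$. When $j\ge k+1$ only $Y_{k+1}$ inside $a_k$ depends on $\sigma_j$, so $N_k'=0$ and a direct quotient-rule differentiation of $a_k$ with respect to $Y_{k+1}$ gives $a_k' = \big[(N_k + Y_{k+1}\tanh(d_ku_k)) - (Y_{k+1}+N_k\tanh(d_ku_k))\tanh(d_ku_k)\big]\,Y'_{k+1,j} / [N_k+Y_{k+1}\tanh(d_ku_k)]^2$; the numerator collapses to $N_k(1-\tanh^2(d_ku_k)) = N_k/\cosh^2(d_ku_k)$, so $a_k' = N_k b_k Y'_{k+1,j}$ and hence $Y'_{kj} = N_k^2 b_k Y'_{k+1,j}$, which is the first line of \eqref{yprime}. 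When $j=k$ I must also account for the dependence of $N_k$, $u_k$, and $\tanh(d_ku_k)$, $\cosh(d_ku_k)$ on $\sigma_k$; here $\partial u_k/\partial\sigma_k = \ii\mu_k\omega/(2u_k)$, $\partial N_k/\partial\sigma_k = 1/(2u_k)\cdot 1/(\ii\mu_k\omega)\cdot\ii\mu_k\omega$ — more precisely $N_k = u_k/(\ii\mu_k\omega)$ so $N_k' = 1/(2u_k\ii\mu_k\omega)$ — and $\partial\tanh(d_ku_k)/\partial\sigma_k = d_k\,(\partial u_k/\partial\sigma_k)/\cosh^2(d_ku_k)$. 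Substituting these into $Y'_{kk} = N_k'a_k + N_k a_k'$ and simplifying, the term $N_k' a_k$ produces $a_k/(2u_k\ii\mu_k\omega)$, which I expect to match $a_k/(2u_k)\cdot(1/(\ii\mu_k\omega))$; combining with $N_k a_k'$ and using $N_k = u_k/(\ii\mu_k\omega)$ repeatedly, together with $b_k\cosh^2(d_ku_k) = 1/[N_k+Y_{k+1}\tanh(d_ku_k)]^2$, should collapse the whole expression to the stated $\frac{a_k}{2u_k} + \frac{b_k}{2}\big[N_k^2 d_k - Y_{k+1}(d_k Y_{k+1} + \frac{1}{\ii\mu_k\omega})\big]$.

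The main obstacle is the bookkeeping in the $j=k$ case: there are three distinct sources of $\sigma_k$-dependence feeding through both the numerator and denominator of $a_k$ as well as through the prefactor $N_k$, and the $d_k$-proportional pieces coming from differentiating $\tanh(d_ku_k)$ in numerator and denominator must be shown to recombine into the single bracket $N_k^2 d_k - d_k Y_{k+1}^2$ while the non-$d_k$ pieces assemble into $-\frac{b_k}{2} Y_{k+1}/(\ii\mu_k\omega)$ plus the leading $a_k/(2u_k)$ term. The cleanest route is probably to differentiate $Y_k = N_k(Y_{k+1}+N_k t)/(N_k+Y_{k+1}t)$ treating $t=\tanh(d_ku_k)$, $N_k$, and $u_k$ as intermediate variables, compute $\partial Y_k/\partial N_k$, $\partial Y_k/\partial t$, and then multiply by $\partial N_k/\partial\sigma_k$ and $\partial t/\partial\sigma_k = d_k(1-t^2)\,\partial u_k/\partial\sigma_k$, using $1-t^2 = 1/\cosh^2(d_ku_k)$ to surface the factor $b_k$; this keeps each partial derivative a short rational expression and defers the algebraic collapse to a single final simplification. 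Nothing here requires more than the quotient rule, the identity $1-\tanh^2 = \operatorname{sech}^2$, and the chain rule, so the lemma follows once the $j=k$ simplification is carried out carefully.
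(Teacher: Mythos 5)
Your overall strategy coincides with the paper's proof: differentiate the recursion \eqref{surfadm} by the chain rule, use the induction hypothesis $Y'_{k+1,j}=0$ for $j\le k$ to dispose of the $j<k$ entries, and use $1-\tanh^2=\cosh^{-2}$ to surface the factor $b_k$. The base case and the $j\ge k+1$ case are handled correctly and completely. There are, however, two problems with the $j=k$ case, which is the substance of the lemma. First, a concrete arithmetic slip: from $N_k=u_k/(\ii\mu_k\omega)$ and $\partial u_k/\partial\sigma_k=\ii\mu_k\omega/(2u_k)$ one gets $\partial N_k/\partial\sigma_k=\frac{1}{\ii\mu_k\omega}\cdot\frac{\ii\mu_k\omega}{2u_k}=\frac{1}{2u_k}$; your ``more precise'' value $1/(2u_k\ii\mu_k\omega)$ drops the factor $\ii\mu_k\omega$ contributed by $u_k'$, and consequently your leading term $N_k'a_k$ comes out as $a_k/(2u_k\ii\mu_k\omega)$ rather than the required $a_k/(2u_k)$ --- you even state that you expect it to match $a_k/(2u_k)\cdot(1/(\ii\mu_k\omega))$, which is not the term appearing in \eqref{yprime}. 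Second, the collapse of $N_k a_k'$ into $\frac{b_k}{2}\bigl[N_k^2d_k-Y_{k+1}(d_kY_{k+1}+\frac{1}{\ii\mu_k\omega})\bigr]$ is only asserted (``should collapse''), not carried out, and this is exactly where the content of the lemma lies.

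For completeness, your decomposition does work once $N_k'=1/(2u_k)$ is used: writing $t=\tanh(d_ku_k)$ one finds $\partial a_k/\partial N_k=-Y_{k+1}b_k$, $\partial a_k/\partial t=(N_k^2-Y_{k+1}^2)/(N_k+Y_{k+1}t)^2$, and $t'=d_k(1-t^2)/(2N_k)$ since $u_k'=1/(2N_k)$; hence $N_ka_k'=-\frac{N_kY_{k+1}b_k}{2u_k}+\frac{d_k(N_k^2-Y_{k+1}^2)b_k}{2}$, and using $N_k/u_k=1/(\ii\mu_k\omega)$ this is precisely $\frac{b_k}{2}\bigl[N_k^2d_k-Y_{k+1}(d_kY_{k+1}+\frac{1}{\ii\mu_k\omega})\bigr]$. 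This is essentially the computation the paper performs (it differentiates the quotient directly and then invokes $Y'_{k+1,k}=0$ and the constancy of $N_k/u_k$); your organization through the intermediate variables $N_k$, $t$, $Y_{k+1}$ is a legitimate and slightly tidier bookkeeping, but as written the proposal neither gets the constants right nor completes the decisive simplification.
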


\begin{proof}
From \eqref{charadm} we obtain
\begin{equation}
\frac{\partial u_k}{\partial \sigma_j}
	= \frac{\partial}{\partial \sigma_j}
	\sqrt{\lambda^2 + \ii\sigma_k\mu_k\omega}
	= \frac{1}{2N_k} \delta_{kj}, \qquad
\frac{\partial N_k}{\partial \sigma_j}
	= \frac{\partial}{\partial \sigma_j}
	\frac{u_k}{\ii\mu_k\omega}
	= \frac{1}{2u_k}  \delta_{kj},
\label{duknk}
\end{equation}
where $\delta_{kj}$ is the Kronecker delta, that is $1$ if $k=j$ and $0$
otherwise.
The recursion initialization \eqref{recinit} follows from $Y_n=N_n$; see
Section \ref{sec:nonlin}.
We have 
\begin{multline*}
Y'_{kj} = \frac{\partial N_k}{\partial \sigma_j} a_k
+ N_k \cdot \frac{\frac{\partial Y_{k+1}}{\partial \sigma_j} +
\frac{\partial N_k}{\partial \sigma_j}\tanh(d_k u_k) + N_k
\frac{\partial\tanh(d_k u_k)}{\partial \sigma_j}}{N_k + Y_{k+1}
\tanh(d_k u_k)} \\
- N_k a_k \cdot \frac{\frac{\partial N_k}{\partial \sigma_j} + \frac{\partial
Y_{k+1}}{\partial \sigma_j}\tanh(d_k u_k) + Y_{k+1} 
\frac{\partial\tanh(d_k u_k)}{\partial \sigma_j}}{N_k + Y_{k+1} \tanh(d_k u_k)},
\end{multline*}
with $a_k$ defined as in \eqref{akbk}.
If $j\neq k$, then $\frac{\partial N_k}{\partial \sigma_j} = 
\frac{\partial u_k}{\partial \sigma_j} = 0$ and we obtain
$$ 
Y'_{kj} = N_k^2 \frac{\frac{\partial Y_{k+1}}{\partial \sigma_j} 
\left(1 - \tanh^2(d_k u_k)\right) }{[N_k + Y_{k+1} \tanh(d_k u_k)]^2}
= N_k^2 b_k Y'_{k+1,j}.
$$
The last formula, with $b_k$ given by \eqref{akbk}, avoids the cancellation in
$1 - \tanh^2(d_k u_k)$.

If $j = k$, after some straightforward simplifications, we get
\begin{multline*}
Y'_{kk} = \frac{\partial N_k}{\partial \sigma_k}a_k + 
\frac{N_k}{N_k + Y_{k+1} \tanh(d_k u_k) }
\biggl[ Y'_{k+1,k} (1 - a_k\tanh(d_k u_k)) \biggr. \\
\biggl. + \frac{\partial N_k}{\partial \sigma_k} (\tanh(d_k u_k) - a_k)
+ \frac{d_k}{2} \left( 1-a_k\frac{Y_{k+1}}{N_k} \right) (1-\tanh^2(d_k u_k))
\biggr].
\end{multline*}
This formula, using \eqref{akbk} and \eqref{duknk}, leads to
$$
Y'_{kk} = \frac{a_k}{2u_k} 
	+ N_k b_k \left[N_k\left(Y'_{k+1,k} + \frac{d_k}{2}\right) 
	- \frac{1}{2}Y_{k+1}\left(\frac{d_k}{N_k}Y_{k+1}
	+ \frac{1}{u_k}\right)\right].
$$
The initialization \eqref{recinit} implies that $Y'_{kj}=0$ for any $j<k$.
In particular $Y'_{k+1,k}=0$, and since $N_k/u_k$ is constant one obtains the
expression of $Y'_{kk}$ given in \eqref{yprime}.
This completes the proof.
\end{proof}

\begin{remark}
\rm
The quantity $a_k$ in \eqref{akbk} appears in the right hand side of
\eqref{surfadm}, and its denominator is present also in $b_k$.
It is therefore possible to implement jointly the recursions \eqref{surfadm}
and \eqref{yprime} in order to reduce the number of floating point operations
required by the computation of the Jacobian.
We also note that since we only need the partial derivatives of $Y_1$ in the
following Theorem \ref{theorem1}, we can overwrite the values of $Y'_{k+1,j}$
with $Y'_{kj}$ at each recursion step, so that only $n$ storage locations are
needed for each $\lambda$ value, instead of $n^2$.
\end{remark}
\medskip

\begin{theorem}\label{theorem1}
The partial derivatives of the residual function \eqref{rsigma} are given by
$$
\frac{\partial r_i(\sigmab)}{\partial \sigma_j} =
\begin{cases}
\displaystyle
\frac{4r}{\mu_0\omega} \mathcal{H}_0\left[ \lambda e^{-2h_i\lambda}
\Imag\left(\frac{\partial R_0(\lambda)}{\partial\sigma_j}\right) \right](r),
\quad & i=1,\ldots,m, \\
\\
\displaystyle
\frac{4}{\mu_0\omega} \mathcal{H}_1\left[ e^{-2h_{i-n}\lambda}
\Imag\left(\frac{\partial R_0(\lambda)}{\partial\sigma_j}\right) \right](r),
\quad & i=m+1,\ldots,2m,
\end{cases}
$$
for $j=1,\ldots,n$.
Here $\mathcal{H}_\nu$ ($\nu=0,1$) denotes the Hankel transform \eqref{hankel},
$r$ is the inter-coil distance, $\frac{\partial R_0(\lambda)}{\partial
\sigma_j}$ is the $j$th component of the gradient of the function
\eqref{reflfact}
$$
\frac{\partial R_0(\lambda)}{\partial \sigma_j}
= \frac{-2\ii\mu_0\omega\lambda}{(\lambda + \ii\mu_0\omega Y_1(\lambda))^2}
\cdot \frac{\partial Y_1}{\partial \sigma_j},
$$
and the partial derivatives $\frac{\partial Y_1}{\partial \sigma_j}$ are given
by Lemma~\ref{lem:derY}.
\end{theorem}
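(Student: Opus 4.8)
The plan is to exploit that, among all the quantities entering the model \eqref{mvmh}, the conductivities $\sigma_1,\dots,\sigma_n$ appear only through the reflection factor $R_0(\lambda)$: the weight $\lambda$, the exponential $e^{-2h_i\lambda}$, the Bessel factor $J_\nu(r\lambda)$ and the measure $\lambda\,d\lambda$ of the Hankel transform \eqref{hankel} do not depend on $\sigmab$. Accordingly I would first differentiate the two representations in \eqref{mvmh} by moving $\partial/\partial\sigma_j$ past the linear integral operator $\mathcal{H}_\nu$ and past $\Imag(\cdot)$ --- the latter being legitimate because $R_0=\Real(R_0)+\ii\,\Imag(R_0)$ with both parts real-differentiable in $\sigmab$, so that $\Imag$ commutes with $\partial/\partial\sigma_j$. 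I would then reduce $\partial R_0/\partial\sigma_j$ to $\partial Y_1/\partial\sigma_j$ by the quotient rule, and finally invoke Lemma~\ref{lem:derY} to evaluate $\partial Y_1/\partial\sigma_j$ through the recursion $k=n,n-1,\dots,1$.

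For the first step, recall from \eqref{risigma}--\eqref{rsigma} that $r_i(\sigmab)$ is a constant datum minus $m^V(\sigmab,h_i)$ for $i\le m$, and minus the corresponding horizontal value for $i>m$; the data terms drop out of the derivative, so $\partial r_i/\partial\sigma_j=-\partial m^V/\partial\sigma_j$ (and likewise in the horizontal case). The delicate point is the interchange of $\partial/\partial\sigma_j$ with the integral in \eqref{hankel}, which I would justify by dominated convergence: for a positive measurement height $h_i$, both the integrand and its $\sigma_j$-derivative are bounded, uniformly for $\sigmab$ in a neighbourhood of the current point, by a fixed multiple of the integrable function $\lambda\,e^{-2h_i\lambda}$ on $(0,\infty)$ (the case $h_i=0$ being covered by the faster-than-$\lambda^{-1}$ decay of the Hankel integrand, which follows from $Y_1(\lambda)\to N_0(\lambda)$ as $\lambda\to\infty$), while $R_0$, $\partial R_0/\partial\sigma_j$ and $J_\nu(r\lambda)$ stay bounded. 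Passing $\partial/\partial\sigma_j$ inside, the minus sign produced by $r_i=b-m^V$ cancels the minus sign inside the brackets of \eqref{mvmh}, yielding exactly the two claimed formulas, with $\Imag(\partial R_0/\partial\sigma_j)$ in place of $-\Imag(R_0)$.

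It then remains to differentiate $R_0=(N_0-Y_1)/(N_0+Y_1)$ of \eqref{reflfact}. The observation that makes this clean is that the top half-space is air, so $\sigma_0=0$ and, by \eqref{charadm}, $u_0(\lambda)=\lambda$ and $N_0(\lambda)=\lambda/(\ii\mu_0\omega)$ is independent of every $\sigma_j$; hence $\sigma_j$ enters $R_0$ only through $Y_1=Y_1(\sigma_1,\dots,\sigma_n)$, and the quotient rule gives
\[
\frac{\partial R_0}{\partial\sigma_j}=\frac{-2N_0}{(N_0+Y_1)^2}\,\frac{\partial Y_1}{\partial\sigma_j},
\]
the denominator being nonzero for $\lambda>0$ because $R_0$ is well defined. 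Writing $N_0+Y_1=(\ii\mu_0\omega)^{-1}(\lambda+\ii\mu_0\omega Y_1)$, substituting $N_0=\lambda/(\ii\mu_0\omega)$, and cancelling the factors $\ii\mu_0\omega$ turns this into the displayed expression for $\partial R_0/\partial\sigma_j$, after which $\partial Y_1/\partial\sigma_j$ is supplied by Lemma~\ref{lem:derY}. I expect the justification of differentiation under the integral to be the only step requiring genuine care; once $N_0=\lambda/(\ii\mu_0\omega)$ has been used, the remaining algebra is routine.
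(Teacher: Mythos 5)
Your argument is correct and follows essentially the same route as the paper, whose entire proof of Theorem~\ref{theorem1} is the one-line remark that the result follows from Lemma~\ref{lem:derY} and equations \eqref{reflfact}, \eqref{mvmh}, and \eqref{risigma}: your sign bookkeeping (the data terms dropping out, the two minus signs cancelling) and your reduction of $\frac{\partial R_0}{\partial\sigma_j}$ to $\frac{-2\ii\mu_0\omega\lambda}{(\lambda+\ii\mu_0\omega Y_1)^2}\frac{\partial Y_1}{\partial\sigma_j}$ via $N_0=\lambda/(\ii\mu_0\omega)$ both check out. The one overstated detail is the claim that at $h_i=0$ the Hankel integrand decays faster than $\lambda^{-1}$: since $R_0(\lambda)=O(\lambda^{-2})$ while $J_\nu(r\lambda)=O(\lambda^{-1/2})$, the integrand is only $O(\lambda^{-1/2})$ and the integral converges conditionally through the oscillation of the Bessel factor, so the interchange at $h_i=0$ requires a uniform-convergence (or integration-by-parts) argument rather than dominated convergence --- a point the paper itself does not address at all.
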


\begin{proof}
The proof follows easily from Lemma~\ref{lem:derY} and from equations
\eqref{reflfact}, \eqref{mvmh}, and \eqref{risigma}.
\end{proof}

\begin{remark}
\rm
The numerical implementation of the above formulae needs care.
It has already been noted in the proof of Lemma \ref{lem:derY} that equations
\eqref{yprime}--\eqref{akbk} are written in order to avoid cancellations that
may introduce huge errors in the computation.
Moreover, to prevent overflow in the evaluation of the term
$$
\cosh^2(d_k u_k(\lambda)) 
= \cosh^2(d_k\sqrt{\lambda^2+\ii\sigma_k\mu_k\omega})
$$
in the denominator of $b_k$, we fix a value $\lambda_{\text{max}}$ and for
$\Real(d_k u_k(\lambda))>\lambda_{\text{max}}$ we let $b_k=b_k(\lambda)=0$.
In our numerical experiments we adopt the value $\lambda_{\text{max}}=300$.
\end{remark}
\medskip

Under the same assumptions assumed at the end of Section \ref{sec:nonlin}, we
obtain the complexity of the joint computation of the function
$\bm{r}(\sigmab)$, defined in \eqref{rsigma}, and its Jacobian, given in
Theorem \ref{theorem1}.
It amounts to $O((3n^2+8mn)q)$ \emph{flops}, $3nq$ complex functions, and $mnq$
real functions.

To approximate the Jacobian by finite differences, as in \eqref{findiff}, one
has to evaluate $n+1$ times $\bm{r}(\sigmab)$, corresponding to
$O((45n^2+8mn)q)$ \emph{flops}, $2n^2q$ complex functions, and $mnq$ real
functions.
It is immediate to observe that the computation of the Jacobian is not more
time consuming than its approximation by finite differences, and that for a
moderately large $n$ it is much faster to directly compute it, instead than
using an approximation.

In order to further reduce the computational cost, it is possible to resort to
the \emph{Broyden update} of the Jacobian, which can be interpreted as a
generalization of the secant method.
Let us denote with $J_0=J(\sigmab_0)$ the Jacobian of the function
$\bm{r}(\sigmab)$ computed in the initial point $\sigmab_0$.
Then, the Broyden update consists of applying the following recursion
\begin{equation}
J_k = J_{k-1} +
	\frac{(\bm{y}_k-J_{k-1}\bm{s}_k)\bm{s}_k^T}{\bm{s}_k^T\bm{s}_k},
\label{broyden}
\end{equation}
where $\bm{s}_{k}=\sigmab_k-\sigmab_{k-1}$ and
$\bm{y}_k = r(\sigmab_{k})-r(\sigmab_{k-1})$.
This formula makes the linearization
$$
r_k(\sigmab) = r(\sigmab_{k}) + J_k (\sigmab-\sigmab_{k})
$$
exact in $\sigmab_{k-1}$ and guarantees the least change in the Frobenius norm
$\|J_k-J_{k-1}\|_F$.
The usual approach is to apply recursion \eqref{broyden} for $1,\ldots,k_B-1$,
and to recompute the Jacobian after $k_B$ iterations, before reapplying the
update, in order to improve accuracy.
A single application of \eqref{broyden} takes $10mn+2(m+n)$ \emph{flops}, to be
added to the cost of the evaluation of $\bm{r}(\sigmab)$.
We will investigate the performance of this method in the numerical
experiments.

\section{Inversion algorithm}\label{sec:invalgo}

Let the measured data vector $\bm{b}$, the model predictions vector
$\bm{m}(\sigmab)$, and the residual vector $\bm{r}(\sigmab)$, be defined as in
\eqref{bmsigma}--\eqref{rsigma}.
The problem of data inversion, which is crucial in order to recover the
inhomogeneities of the soil, consists of computing the conductivity $\sigma_i$
of each layer ($i=1,\ldots,n$) which determine a given data set
$\bm{b}\in\R^{2m}$.
As it is customary, we use a least squares approach, by solving the nonlinear
problem
\begin{equation}\label{least}
\displaystyle \min_{\sigmab\in\R^n} f(\sigmab), \qquad
f(\sigmab) = \frac{1}{2} \|\bm{r}(\sigmab)\|^2
= \frac{1}{2}\sum_{i=1}^{2m}r_i^2(\sigmab),
\end{equation}
where $\|\cdot\|$ denotes the Euclidean norm and $r_i(\sigmab)$ is defined in
\eqref{risigma}.

The vector $\sigmab^*$ is a local minimizer of \eqref{least} if and only if it
is a stationary point, i.e., if $\bm{f}'(\sigmab^*)=0$, where
$\bm{f}'(\sigmab)$ is the gradient of the function $f$, defined by
\begin{equation}
[\bm{f}'(\sigmab)]_{j} = \frac{\partial f(\sigmab)}{\partial \sigma_j} 
= \sum_{i=1}^{2m} r_i(\sigmab)\frac{\partial r_i(\sigmab)}{\partial \sigma_j},
\qquad j=1,\ldots,n;
\label{gradf}
\end{equation}
see, e.g.,~\cite{bjo96} for a complete treatment.
We assume that $f$ is differentiable and smooth enough that the following
Taylor expansion
$$
\bm{f}'(\sigmab + \bm{s}) 
= \bm{f}'(\sigmab) + \bm{f}''(\sigmab)\bm{s} + O(\|\bm{s}\|^2) 
\simeq \bm{f}'(\sigmab) + \bm{f}''(\sigmab)\bm{s} 
$$
is valid for $\|\bm{s}\|$  sufficiently small, where 
\begin{equation}
[\bm{f}''(\sigmab)]_{jk} = \frac{\partial^2 f(\sigmab)}{\partial \sigma_j
\partial \sigma_k} = \sum_{i=1}^{2m} \left(\frac{\partial r_i(\sigmab)}{\partial \sigma_j} \frac{\partial r_i(\sigmab)}{\partial \sigma_k} +  r_i(\sigmab)\frac{\partial^2 r_i(\sigmab)}{\partial \sigma_j \partial \sigma_k}\right)
\label{hessf}
\end{equation}
is the Hessian of the function $f$.

Newton's method chooses the step $\bm{s}_\ell$ by imposing that $\sigmab^*$ is
a stazionary point, i.e., as the solution to
$$
\bm{f}''(\sigmab_\ell)\bm{s}_{\ell} = - \bm{f}'(\sigmab_\ell).
$$
The next iterate is then computed as
$\sigmab_{\ell+1}=\sigmab_\ell+\bm{s}_\ell$.
The analytic expression of the Hessian $\bm{f}''(\sigmab)$ is not always
available; whenever it is, its computation implies a large computational cost.
To overcome this problem, one possibility is to resort to the Gauss--Newton
method, which is based on the solution of a sequence of linear approximations
of $\bm{r}(\sigmab)$, rather than of $\bm{f}'(\sigmab)$.

Let $\bm{r}$ be Fr\'echet differentiable and $\sigmab_k$ denote the current
approximation, then we can write 
$$
\bm{r}(\sigmab_{k+1}) \simeq \bm{r}(\sigmab_{k}) + J(\sigmab_{k})\bm{s}_k,
$$
where $\sigmab_{k+1} = \sigmab_k + \bm{s}_k$ and $J(\sigmab)$ is the Jacobian
of $\bm{r}(\sigmab)$, defined by
$$
[J(\sigmab)]_{ij} = \frac{\partial r_i(\sigmab)}{\partial \sigma_j},
\qquad i=1,\ldots,2m, \ j=1,\ldots,n.
$$
At each step $k$, $\bm{s}_k$ is the solution of the linear least squares
problem
\begin{equation}\label{gaussnewt}
\displaystyle \min_{\bm{s}\in\R^n} \|\bm{r}(\sigmab_k) + J_k\bm{s}\|,
\end{equation}
where $J_k=J(\sigmab_k)$ or some approximation; see, e.g.,~\eqref{findiff}
and~\eqref{broyden}.

Problem \eqref{gaussnewt} is equivalent to the normal equation
\begin{equation}
J_k^T J_k \bm{s} = - J_k^T \bm{r}(\sigmab_k),
\label{normal}
\end{equation}
from which we obtain the following iterative method
\begin{equation}\label{gaussnewt2}
\sigmab_{k+1} = \sigmab_k + \bm{s}_k 
= \sigmab_k - J_k^{\dagger} \, \bm{r}(\sigmab_k),
\end{equation}
where $J_k^{\dagger}$ is the Moore--Penrose pseudoinverse of
$J_k$~\cite{bjo96}; if $2m\geq n$ and $J_k$ has full rank, then
$J_k^{\dagger}=(J_k^TJ_k)^{-1}J_k^T$.
Using this notation, the gradient \eqref{gradf} and the Hessian \eqref{hessf}
of $f(\sigmab)$ can be written as
\begin{equation}\label{newt}
\begin{aligned}
\bm{f}'(\sigmab) &= J(\sigmab)^T \bm{r}(\sigmab), \\ 
\bm{f}''(\sigmab) &= J(\sigmab)^TJ(\sigmab) 
+ \sum_{i=1}^{2m} r_i(\sigmab) H_i(\sigmab),
\end{aligned}
\end{equation}
where 
$$
[H_i(\sigmab)]_{jk} = \frac{\partial^2 r_i(\sigmab)}{\partial \sigma_j \partial \sigma_k}
$$ 
is the Hessian of the $i$th residual $r_i(\sigmab)$.
Then, the Gauss--Newton method \eqref{gaussnewt2} can be seen as a special case
of Newton's method, obtained by neglecting the term
$\sum_{i=1}^{2m} r_i(\sigmab) H_i(\sigmab)$ from \eqref{newt}.
This term is small if either each $r_i(\sigmab)$ is mildly nonlinear at
$\sigmab_k$, or the residuals $r_i(\sigmab_k)$, $i = 1,...,2m$, are small.
Since we are focused on the nonlinear case, we do not take into account the
first assumption.
We remark that in the case of a mildly nonlinear problem, a linear model is
available \cite{borch97,mcneill80}.

When the residuals $r_i(\sigmab_k)$ are small, or when the problem is
consistent ($\bm{r}(\sigmab^*)=0$), the Gauss--Newton method can be expected to
behave similarly to Newton's method. In particular, the local convergence rate
will be quadratic for both methods.
If the above conditions are not satisfied, the Gauss--Newton method may not
converge.
We remark that, while the physical problem is obviously consistent, this is not
necessarily true in our case, since we assume a layered soil, that is, we
approximate the conductivity $\sigma(z)$ by a piecewise constant function.
Furthermore, in the presence of noise in the data the problem will certainly be
inconsistent.

To ensure convergence, the damped Gauss--Newton method replaces
the approximation \eqref{gaussnewt2} by
\begin{equation}
\sigmab_{k+1} = \sigmab_k + \alpha_k\bm{s}_k,
\label{dampedGN}
\end{equation}
where $\alpha_k$ is a step length to be determined.
To choose it, we used the Armijo--Goldstein principle~\cite{ortega1970}, which
selects $\alpha_k$ as the largest number in the sequence $2^{-i}$,
$i=0,1,\dots$, for which the following inequality holds
$$
\|\bm{r}(\sigmab_k)\|^2 - \|\bm{r}(\sigmab_k 
+ \alpha_k\bm{s}_k)\|^2 \ge \frac{1}{2} \alpha_k\|J_k\bm{s}_k \|^2.
$$

The damped method allows us to include an important physical constraint in the
inversion algorithm, i.e., the positivity of the solution.
In our implementation $\alpha_k$ is the largest step size which both satisfies
the Armijo--Goldstein principle and ensures that all the solution components are
positive.

As we will show in the following section, the problem is severely
ill-conditioned, so regularization is needed.

\section{Regularization methods}\label{sec:regul}

To investigate the conditioning of problem~\eqref{least}, we studied the
behaviour of the singular values of the Jacobian matrix $J=J(\sigmab)$ of the
vector function $\bm{r}(\sigmab)$.
Let $J=U\Gamma V^T$ be the singular value decomposition (SVD)~\cite{bjo96} of
the Jacobian, where $U$ and $V$ are orthogonal matrices of size $2m$ and $n$,
respectively, $\Gamma=\diag(\gamma_1,\ldots,\gamma_p,0,\ldots,0)$ is the
diagonal matrix of the singular values, and $p$ is the rank of $J$; its
condition number is then given by $\gamma_1/\gamma_p$.

\begin{figure}[htb]
\centering
\includegraphics[width=.48\textwidth]{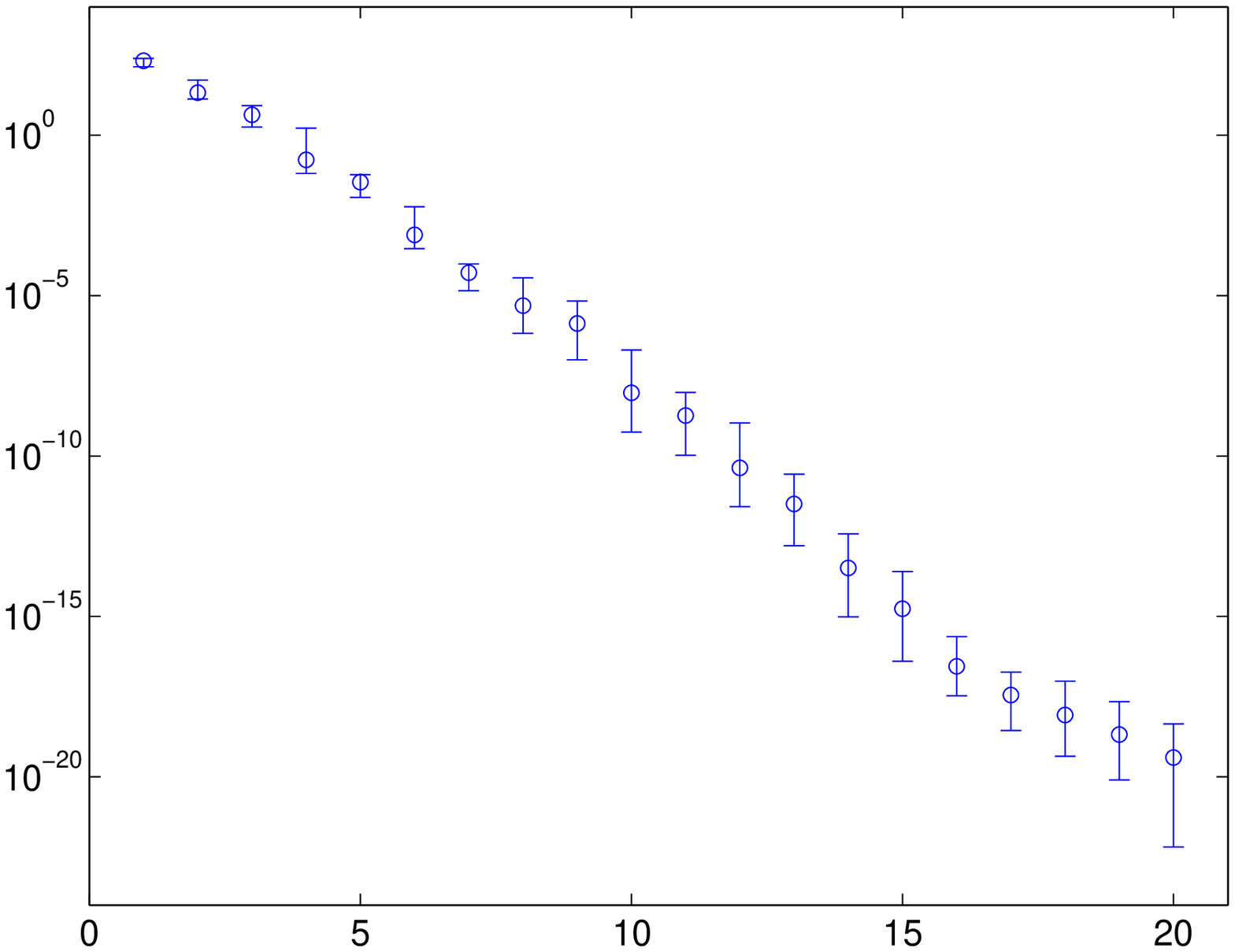}
\includegraphics[width=.48\textwidth]{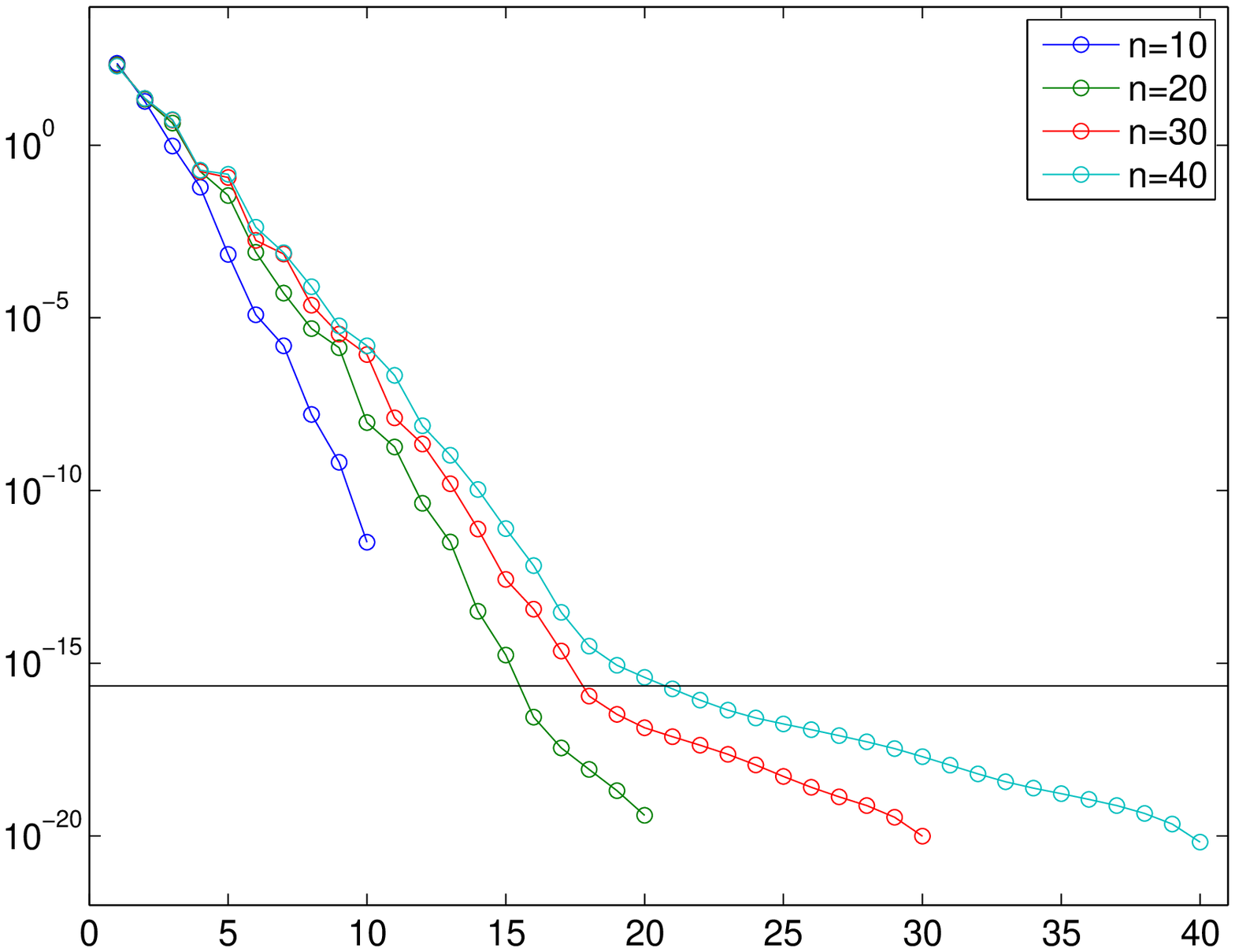}
\caption{SVD of the Jacobian matrix: left, average singular values and errors
($n=20$); right, average singular values for $n=10,20,30,40$.}
\label{fig:jacksvd}
\end{figure}

Fixed $m=10$, we generate randomly $1000$ vectors $\sigmab\in\R^{20}$, having
components in $[0,100]$. For each of them we evaluate the correponding Jacobian
$J(\sigmab)$ by the formulae proved in Theorem~\ref{theorem1} and compute its
SVD.
The left graph in Figure~\ref{fig:jacksvd} shows the average of the singular
values obtained by the above procedure and, for each of them, its minimum and
maximum value.
It is clear that deviation from the average is small, so that the
condition number of the Jacobian matrix has of the same order of magnitude in
all tests.
Consequently, the linearized problem is severely ill-conditioned independently
of the value of $\sigmab$, and we do not expect its condition number to change
much during iteration. 

The right graph in Figure~\ref{fig:jacksvd} reports the average singular values
when $n=2m=10,20,30,40$.
The figure shows that the condition number is about $10^{14}$ when $n=10$ and
increases with dimension.
The singular values appear to be exponentially decaying, so the problem is not
strictly rank-deficient.
The decay rate of singular values appears to change below machine precision
$2.2\cdot 10^{16}$, which is represented in the graph by a horinzontal line.
The exact singular vales are likely to decay with a stronger rate while the
computed ones, reported in the graph, are probably strongly perturbated by
error propagation.
A problem of this kind is generally referred to as a \emph{discrete ill-posed
problem} \cite{han98}, so regularization is needed.

A typical approach for the solution of ill-posed problems is Tikhonov
regularization. It has been applied by various author to the inversion of
geophysical data; see, e.g.,~\cite{borch97,deidda03,hendr02}.
To apply Tikhonov's method to the nonlinear problem~\eqref{least}, one has to
solve the minimization problem
\begin{equation}
\min_{\bm{\sigmab\in\R^n}} \{ \|\bm{r}(\sigmab)\|^2 + \mu^2\|L\sigmab\|^2 \}
\label{tikhonov}
\end{equation}
for a fixed value of the parameter $\mu$, where $L$ is a regularization matrix;
$L$ is often chosen as the identity matrix, or a discrete approximation of the
first or second derivative.
When the variance of the noise in the data is known, the regularization
parameter $\mu$ is usually chosen by the discrepancy principle, otherwise
various heuristic methods are used; see~\cite{han98}.
The available methods to estimate the parameter require the computation of the
regularized solution $\sigmab_\mu$ of \eqref{tikhonov} for many values of
$\mu$. This can be done, for example, by the Gauss--Newton method, leading to a
a large computational effort.

To reduce the complexity we consider an alternative regularization technique
based a low-rank approximation of the Jacobian matrix.
The best rank $\ell$ approximation ($\ell\leq p$) to the Jacobian according to
the Euclidean norm, i.e., the matrix $A_\ell$ which minimizes $\|J-A\|$ over
all the matrices of rank $\ell$, can be easily obtained by the above SVD
decomposition $J=U\Gamma V$.
This procedure allows us to replace the ill-conditioned Jacobian matrix with a
well-conditioned rank-deficient matrix $A_\ell$. 
The corresponding solution to \eqref{gaussnewt} is known as the truncated SVD
(TSVD) solution \cite{han87} and can be expressed as
\begin{equation}
\bm{s}^{(\ell)} = -A_\ell^\dagger \bm{r}
= -\sum_{i=1}^\ell \frac{\bm{u}_i^T\bm{r}}{\gamma_i} \bm{v}_i, 
\label{tsvdsol}
\end{equation}
where $\ell=1,\ldots,p$ is the regularization parameter, $\gamma_i$ are the
singular values, the singular vectors $\bm{u}_i$ and $\bm{v}_i$ are the
orthogonal columns of $U$ and $V$, respectively, and
$\bm{r}=\bm{r}(\sigmab_k)$.

To introduce a regularization matrix $L\in\R^{t\times n}$ ($t\leq n$), problem
\eqref{gaussnewt} is usually replaced by
\begin{equation}
\min_{\bm{s}\in\mathcal{S}} \|L\bm{s}\|, \qquad
\mathcal{S} = \{ \bm{s}\in\R^n ~:~ J^TJ\bm{s} = -J^T\bm{r} \},
\label{minlnorm}
\end{equation}
under the assumption $\mathcal{N}(J) \cap \mathcal{N}(L) = \{0\}$.
The generalized singular value decomposition (GSVD) \cite{paige81} of the
matrix pair $(J,L)$ is the factorization
$$
J = U \Sigma_J Z^{-1}, \qquad L = V \Sigma_L Z^{-1},
$$
where $U$ and $V$ are orthogonal matrices and $Z$ is nonsingular.
The general form of the diagonal matrices $\Sigma_J$ and $\Sigma_L$, having the
same size of $J$ and $L$, is more complicated than we need, so we analyze two
cases we are interested in.
In the case $2m\geq n=p$, the two diagonal matrices are given by
$$
\Sigma_J = \begin{bmatrix} 0 & 0 \\ C & 0 \\ 0 & I_{n-t} \end{bmatrix}, \qquad
\Sigma_L = \begin{bmatrix} S & 0 \end{bmatrix}, 
$$
where $I_{n-t}$ is the identity matrix of size $n-t$ and
$$
C = \diag(c_1,\ldots,c_t), \qquad S = \diag(s_1,\ldots,s_t),
$$
with $c_i^2+s_i^2=1$. The diagonal elements are ordered such that the
\emph{generalized singular values} $\gamma_i=c_i/s_i$ are nondecresing with
$i=1,\ldots,t$.
When $p=2m<n$, we have
$$
\Sigma_J = \begin{bmatrix} 0 & C & 0 \\ 0 & 0 & I_{n-t} \end{bmatrix}, \qquad
\Sigma_L = \begin{bmatrix} I_{n-2m} & 0 & 0 \\ 0 & S & 0 \end{bmatrix}, 
$$
where $C$ and $S$ are diagonal matrices of size $2m-n+t$. The positivity of
this number poses a constraint on the size of $L$.

The truncated GSVD (TGSVD) solution $\bm{s}_{\ell}$ to \eqref{minlnorm} is then
defined as
\begin{equation}
\bm{s}^{(\ell)} = -\sum_{i=\overline{p}-\ell+1}^{\overline{p}}
    \frac{\bm{u}_{2m-p+i}^T\bm{r}}{c_i}\, \bm{z}_{n-p+i}
    - \sum_{i=\overline{p}+1}^p (\bm{u}_{2m-p+i}^T\bm{r})\, \bm{z}_{n-p+i},
\label{tgsvdsol}
\end{equation}
where $\ell=0,1,\ldots,\overline{p}$ is the regularization parameter,
$\overline{p}=t$ if $2m\geq n$ and $\overline{p}=2m-n+t$ if $2m<n$.

Our approach to construct a regularized solution to \eqref{least} consists of
regularizing each step of the damped Gauss-Newton method \eqref{dampedGN} by
either TSVD or TGSVD.
For a fixed value of the regularization parameter $\ell$, we substitute
$\bm{s}$ in \eqref{dampedGN} by $\bm{s}^{(\ell)}$ expressed by either
\eqref{tsvdsol} or \eqref{tgsvdsol}.
We let the resulting method
\begin{equation}
\sigmab_{k+1}^{(\ell)} = \sigmab_k^{(\ell)} + \alpha_k\bm{s}_k^{(\ell)}
\label{regdampedGN}
\end{equation}
iterate until
$$
\|\sigmab_k^{(\ell)}-\sigmab_{k-1}^{(\ell)}\| < \tau \|\sigmab_k^{(\ell)}\|
\quad \text{or} \quad k > 100 \quad \text{or} \quad \alpha_k < 10^{-5},
$$
for a given tolerance $\tau$.
The constraint on $\alpha_k$ is due to its role in ensuring the positivity of
the solution. Indeed, when the solution blows up because of ill-conditioning
the damping parameter assumes very small values.
We denote the solution at convergence by $\sigmab^{(\ell)}$.
We will discuss the choice of $\ell$ in the next subsection.

\subsection{Choice of the regularization parameter}

In the previous Section we saw how to regularize the ill-conditioned problem
\eqref{least} with the aid of T(G)SVD. The choice of the regularization
parameter is crucial in order to obtain a good approximation $\sigmab^{(\ell)}$
of $\sigmab$. In this work we make use of some well-known methods to choose a
suitable index $\ell$.

In real-world applications experimental data are always affected by noise.
To model this situation, we assume that the data vector in the residual
function \eqref{rsigma}, whose norm is minimized in problem \eqref{least}, can
be expressed as $\bm{b}=\widehat{\bm{b}}+\bm{e}$, where $\widehat{\bm{b}}$
contains the exact data and $\bm{e}$ is the noise vector. This vector is
generally assumed to have normally distributed entries with mean zero and
common variance. 

If an accurate estimate of the norm of the error $\mathbf{e}$ in $\mathbf{b}$
is known, the value of $\ell$ can often be determined with the aid of the 
discrepancy principle~\cite[Section 4.3]{ehn96}. It consists of determining the
regularization parameter $\ell$ as the smallest index
$\ell=\ell_{\text{discr}}$ such that 
\begin{equation}\label{discrp}
\|\mathbf{b}-\bm{m}(\sigmab_{\ell_{\text{discr}}})\|\leq\kappa\|\mathbf{e}\|.
\end{equation}
Here $\kappa>1$ is a user-supplied constant independent of $\|\mathbf{e}\|$.
In our experiments we set $\kappa=1.5$, since it produced the best numerical
results.
The discrepancy principle typically yields a suitable truncation index when 
an accurate bound for $\|{\mathbf{e}}\|$ is available. 

We are also interested in the situation when an accurate bound for
$\|{\mathbf{e}}\|$ is not available and, therefore, the discrepancy principle
cannot be applied.
A large number of methods for determining a regularization parameter in this
situation have been introduced for linear inverse problems~\cite{han98}.
They are known as \emph{heuristic} because it is not possible to prove
convergence results for them, in the strict sense of the definition of a
regularization method; see, e.g.,~\cite[Chapter 4]{ehn96}.
Nevertheless, it has been shown by numerical experiments, that some heuristic
methods provide a good estimation of the optimal regularization parameter in
many inverse problems of applicative interest.

It is not possible, in general, to apply all the heuristic methods, which were
developed in the linear case, to a nonlinear problem.
In this paper we use the L-curve criterion~\cite{hol93}, which can be extended
quite naturally to the nonlinear case.
Let us consider the curve obtained by joining the points 
$$
\left\{\log{\|\bm{r}(\sigmab^{(\ell)})\|},\log{\|L \sigmab^{(\ell)} \|}
\right\}, \quad \ell = 1,\dots,\overline{p},
$$
where $\bm{r}(\sigmab^{(\ell)}) = \mathbf{b}-\bm{m}(\sigmab^{(\ell)})$ is the
residual error associated to the approximate solution $\sigmab^{(\ell)}$
computed by the iterative method \eqref{regdampedGN}, using \eqref{tgsvdsol} as
a regularization method.
If \eqref{tsvdsol} is used instead, it is sufficient to let $L=I$ and replace
$\overline{p}$ by $p$.

This curve exhibits a typical L-shape in many discrete ill-posed problems.
The L-curve criterion seeks to determine the regularization parameter by
detecting the index $\ell$ of the point of the curve closer to the corner of
the ``L''. This choice produces a solution for which both the norm and the
residual are fairly small.

Various method has been proposed to determine the corner of the L-curve.
In our numerical experiments we use two of them.
The first one, which we denote as the \emph{corner} method, considers a
sequence of pruned L-curves, obtained by removing an increasing number of
points, and constructs a list of candidate “vertices” produced by two different
selection algorithms. 
The corner is selected from this list by a procedure which compares the norms
and the residuals of the corresponding solutions~\cite{hjr07}.
It is currently implemented in~\cite{han07}.

The second procedure we use has been recently proposed in~\cite{rr13}, by
extending a method by T. Regi\'nska~\cite{reg96}, which detects the corner by
solving an optimization problem. We will refer to this method as the 
\emph{restricted Regi\'nska} (ResReg) method.

\section{Numerical experiments}\label{sec:numex}

To illustrate the performance of the inversion methods described in the
previous sections we present here the results of a set of numerical
experiments. Initially, we will apply our method to synthetic data sets,
generated by choosing a conductivity distribution and adding random noise to
data. Finally, we will analyze a real data set.

\begin{figure}[htb]
\centering
\includegraphics[width=.32\textwidth]{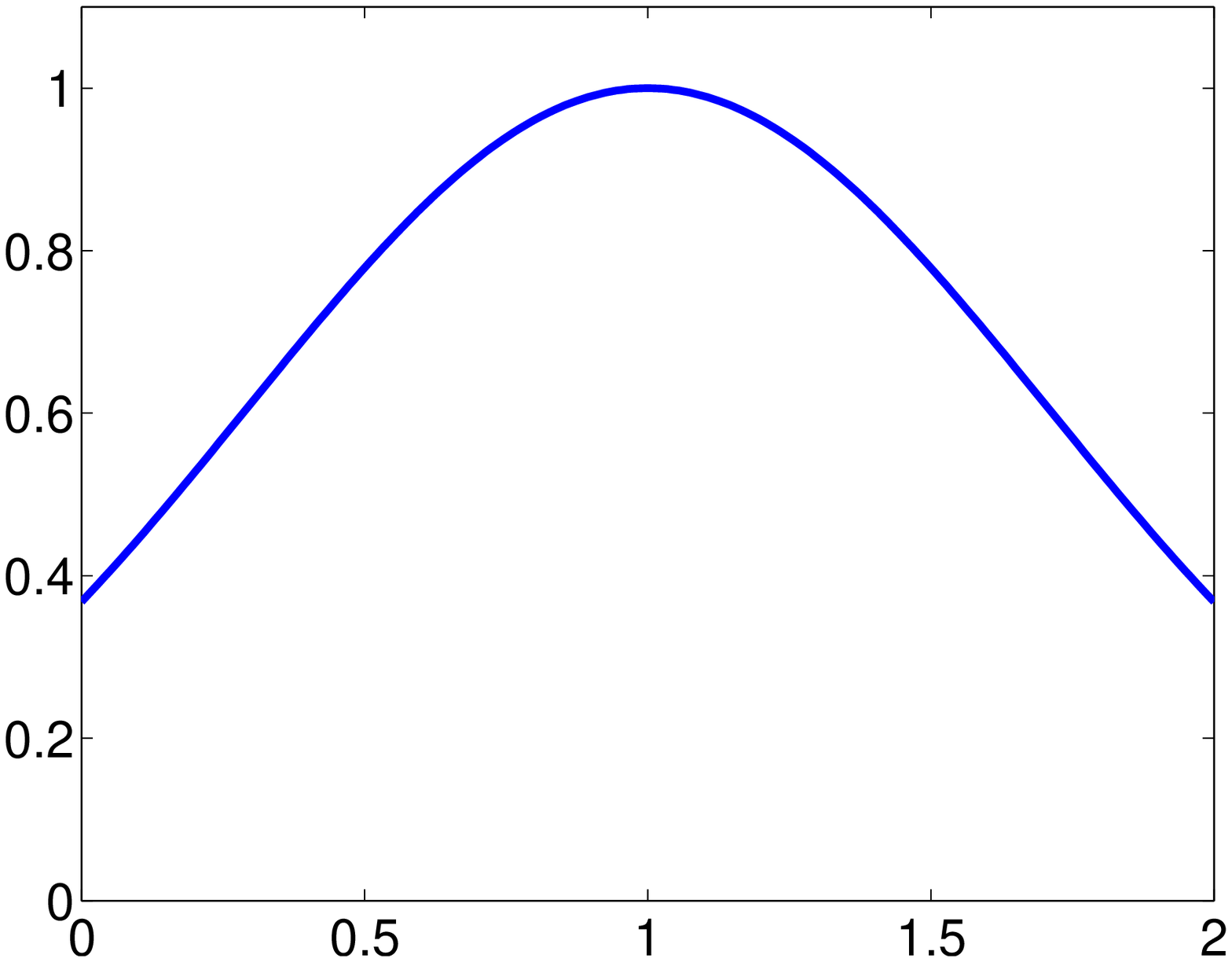}
\includegraphics[width=.32\textwidth]{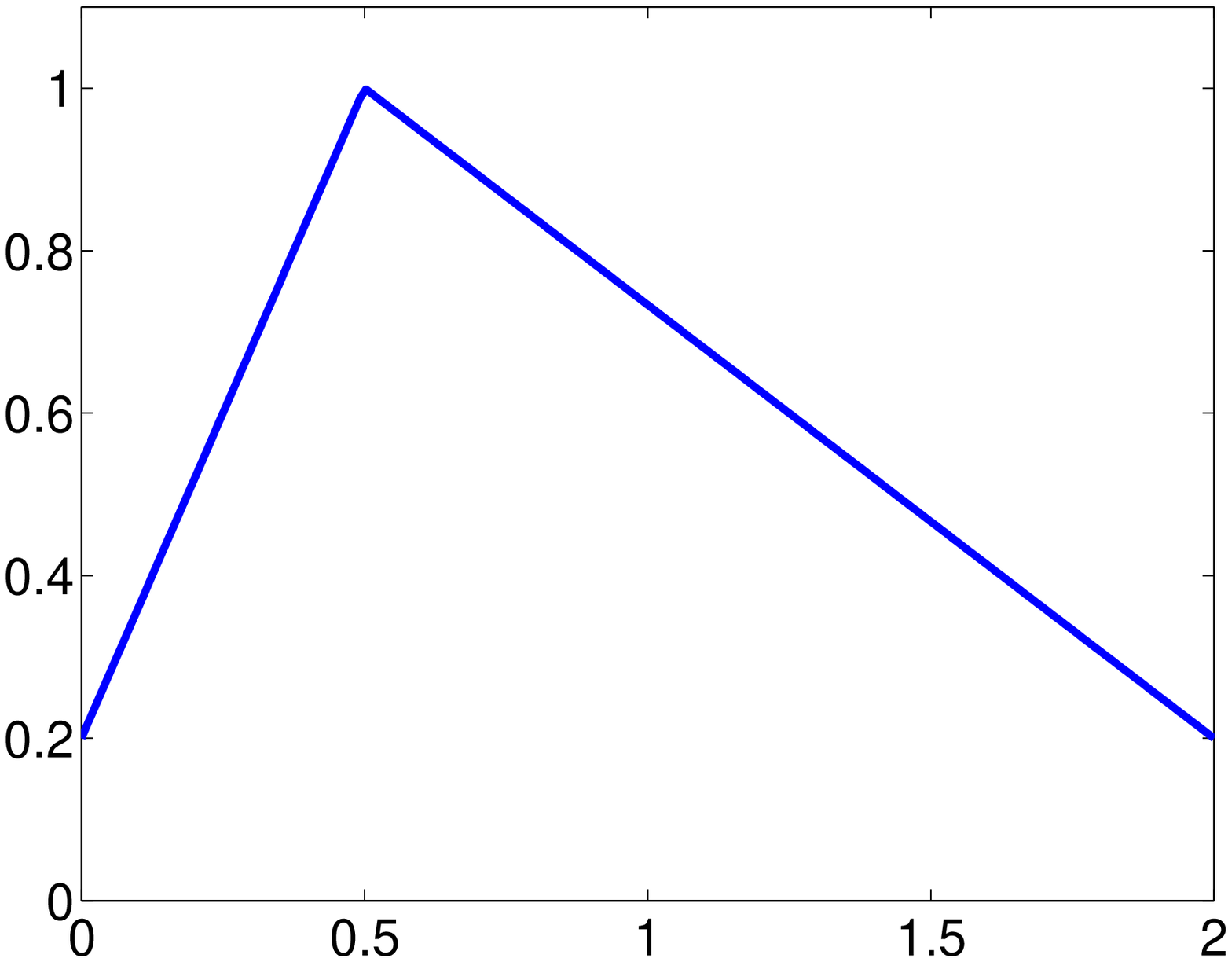}
\includegraphics[width=.32\textwidth]{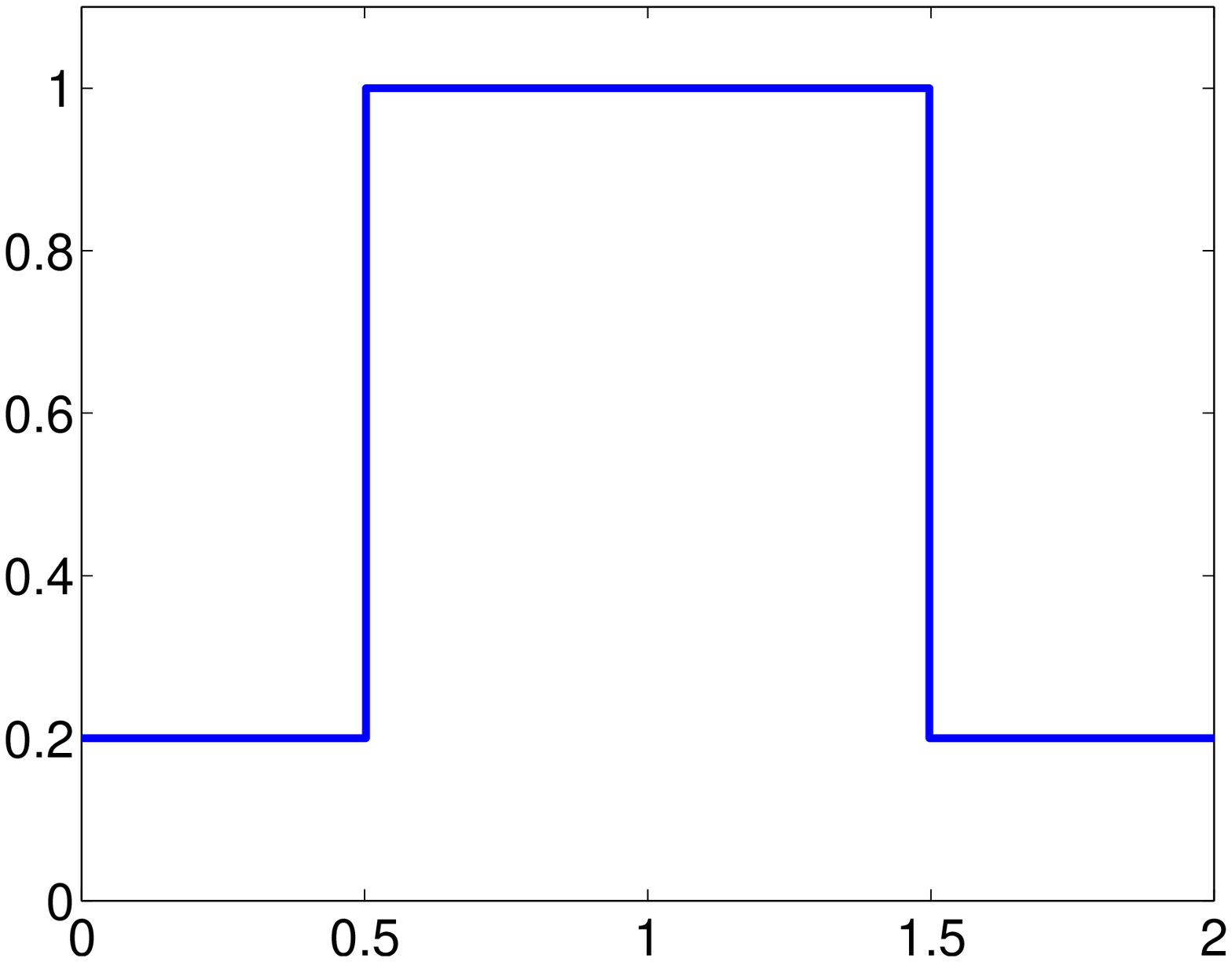}
\caption{Graphs of the conductivity distribution models $f_1$, $f_2$, and $f_3$.
The horizontal axis reports the depth in meters, the vertical axis the
electrical conductivity in Siemens/meter.}
\label{fig:funs}
\end{figure}

Figure~\ref{fig:funs} reports the three functions $f_\ell(z)$, $\ell=1,2,3$,
used in our experiments to model the distribution of conductivity, expressed in
Siemens/meter, with respect to the depth $z$, measured in meters.
The first one is differentiable ($f_1(z)=\e^{-(z-1)^2}$), the second is
piecewise linear, the third is a step function. All model functions assume the
presence of a strongly conductive material at a given depth.

For a chosen model function $f_k$ and a fixed number of layers $n$, we let the
layers thickness assume the constant value $d_k=\bar{d}=2/(n-1)$,
$k=1,\ldots,n-1$ (see Section~\ref{sec:nonlin}), so that $z_j=(j-1)\bar{d}$,
$j=1,\ldots,n$.
The choice of $\bar{d}$ is motivated by the common assumption that a GCM can
give useful information about the conductivity of the ground up to a depth of 2
meters. This fact is confirmed by our experiments.

We assign to each layer the conductivity $\sigma_j=f_k(z_j)$.
Then, we apply the nonlinear model \eqref{bmsigma} to compute the exact data
vector $\widehat{\mathbf{b}}$, letting 
$$
\widehat{b}_i = \begin{cases}
\hat{b}^V_i = m^V(\sigmab,h_i), \quad & i=1,\dots,m, \\
\hat{b}^H_{m-i} = m^H(\sigmab,h_{m-i}), \quad & i=m+1,\dots,2m.
\end{cases}
$$
We assume that the measurements are taken with the EMS in both vertical and
horizontal orientation, placed at the heights $h_i=(i-1)\bar{h}$ above the
ground, $i=1,\ldots,m$, for a chosen height step $\bar{h}$; see \eqref{risigma}.
In our experiments $\bar{h}\geq 0.1\mathrm{m}$.

To simulate experimental errors, we determine the perturbed data vector
$\mathbf{b}$ by adding a noise vector to $\widehat{\mathbf{b}}$.
Specifically, we let the vector $\mathbf{w}$ have normally 
distributed entries with mean zero and variance one, and compute
$$
\mathbf{b}=\widehat{\mathbf{b}}+\mathbf{w} \, \|\widehat{\mathbf{b}}\|
\frac{\tau}{\sqrt{2m}}.
$$
This implies that
$\|\mathbf{b}-\widehat{\mathbf{b}}\|\approx\tau\|\widehat{\mathbf{b}}\|$.
In the computed examples we use the noise levels
$\tau=10^{-3},10^{-2},10^{-1}$.
The value of $\tau$ is used in the discrepancy principle (\ref{discrp}), 
where we substitute $\tau\|\widehat{\mathbf{b}}\|$ for $\|\mathbf{e}\|$.

For each data set, we solve the least squares problem \eqref{least} by the
damped Gauss--Newton method \eqref{dampedGN}.
The damping parameter is determined by the Armijo--Goldstein principle,
modified in order to ensure the positivity of the solution.
Each step of the iterative method is regularized by either the TSVD approach
\eqref{tsvdsol}, or by TGSVD \eqref{tgsvdsol}, for a given regularization
matrix $L$. In our experiments we use both $L=D_1$ and $L=D_2$, the discrete
approximations of the first and second derivatives.
This two choices pose a constraint on the magnitude of the slope and the
curvature of the solution, respectively.
To assess the accuracy of the computations we use the relative error
\begin{equation}
e_\ell = \frac{\|\sigmab-\sigmab^{(\ell)}\|}{\|\sigmab\|},
\label{error}
\end{equation}
where $\sigmab$ denotes the exact solution of the problem and
$\sigmab^{(\ell)}$ its regularized solution with parameter $\ell$, obtained by
\eqref{regdampedGN}.
The experiments were performed using Matlab 8.1 (R2013a) on an Intel Core
i7/860 computer with 8Gb RAM, running Linux.
The software developed is available from the authors upon request.

\begin{table}
\caption{Optimal error $e_{\text{opt}}$ for $m=5,10,20$ and $n=20,40$, for the
TSVD solution ($L=I$) and the TGSVD solution with $L=D_1$ and $L=D_2$. The
Jacobian is computed as in Section~\ref{sec:jacob}.}
\label{tab:example2}
\begin{center}
\begin{tabular}{cc|cc|cc|cc}
& \multicolumn{1}{c}{} & \multicolumn{2}{c}{$L=I$} 
	& \multicolumn{2}{c}{$L=D_1$} & \multicolumn{2}{c}{$L=D_2$} \\
example & \multicolumn{1}{c}{$m$} & $n=20$ & \multicolumn{1}{c}{$n=40$} 
	& $n=20$ & \multicolumn{1}{c}{$n=40$} 
	& $n=20$ & \multicolumn{1}{c}{$n=40$} \\
\hline
      &   5 & 2.4e-01 & 2.4e-01 & 8.6e-02 & 8.0e-02 & 6.9e-02 & 7.0e-02 \\
$f_1$ &  10 & 2.2e-01 & 2.1e-01 & 5.2e-02 & 5.7e-02 & 5.2e-02 & 4.6e-02 \\
      &  20 & 2.2e-01 & 2.2e-01 & 3.9e-02 & 4.9e-02 & 3.1e-02 & 3.5e-02 \\
\hline
      &   5 & 3.1e-01 & 3.7e-01 & 7.2e-02 & 6.4e-02 & 9.7e-02 & 1.2e-01 \\
$f_2$ &  10 & 2.8e-01 & 3.5e-01 & 6.3e-02 & 6.2e-02 & 7.3e-02 & 8.2e-02 \\
      &  20 & 2.8e-01 & 3.9e-01 & 6.5e-02 & 5.9e-02 & 7.9e-02 & 7.2e-02 \\
\hline
      &   5 & 4.2e-01 & 4.6e-01 & 2.9e-01 & 2.9e-01 & 2.9e-01 & 3.0e-01 \\
$f_3$ &  10 & 3.5e-01 & 4.7e-01 & 2.7e-01 & 2.6e-01 & 2.7e-01 & 2.8e-01 \\
      &  20 & 3.3e-01 & 4.7e-01 & 2.6e-01 & 2.6e-01 & 2.7e-01 & 2.9e-01 \\
\hline
\end{tabular}
\end{center}
\end{table}

Our first experiment tries to determine the optimal experimental setting, that
is, the number of measurements to be taken and the number of underground layers
to be considered.
At the same time, we investigate the difference between the TSVD
\eqref{tsvdsol} and the TGSVD \eqref{tgsvdsol} approaches, and the effect on
the solution of the regularization matrix $L$.
For each of the three test conductivity models, we discretize the soil by 20 or
40 layers, up to the depth of 2m.
We generate synthetic measures at 5, 10, and 20 equispaced heights up to 1.9m,
and we solve the problem.
This process is repeated for each regularization matrix.
The (exact) Jacobian is computed as described in Section~\ref{sec:jacob}.
Table~\ref{tab:example2} reports the values of the relative error
$e_{\text{opt}}=\min_\ell e_\ell$, representing the best possible performance
of the method. This value is the average over 20 realizations of the noise.

\begin{figure}[htb]
\centering
\includegraphics[width=.49\textwidth]{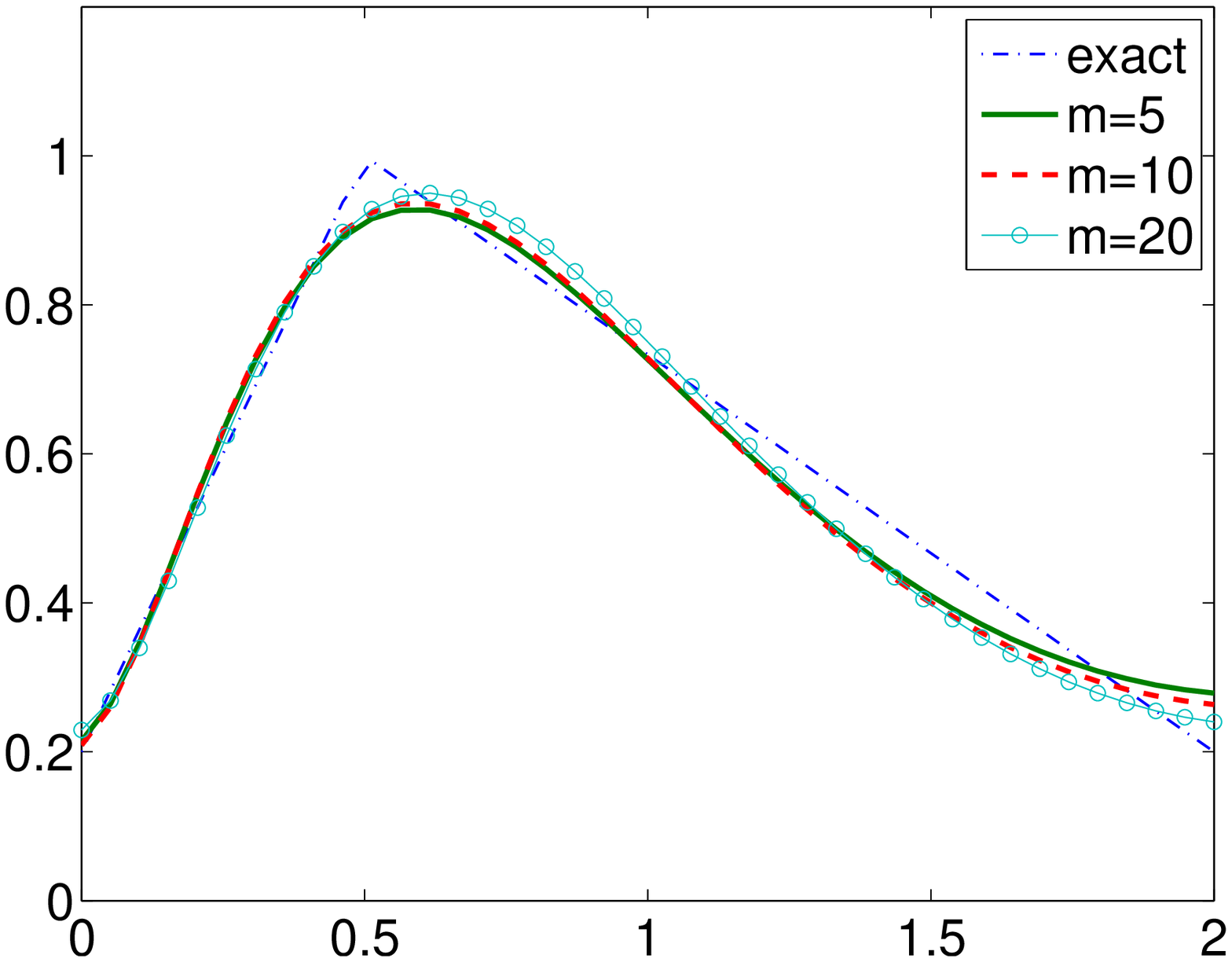}
\includegraphics[width=.49\textwidth]{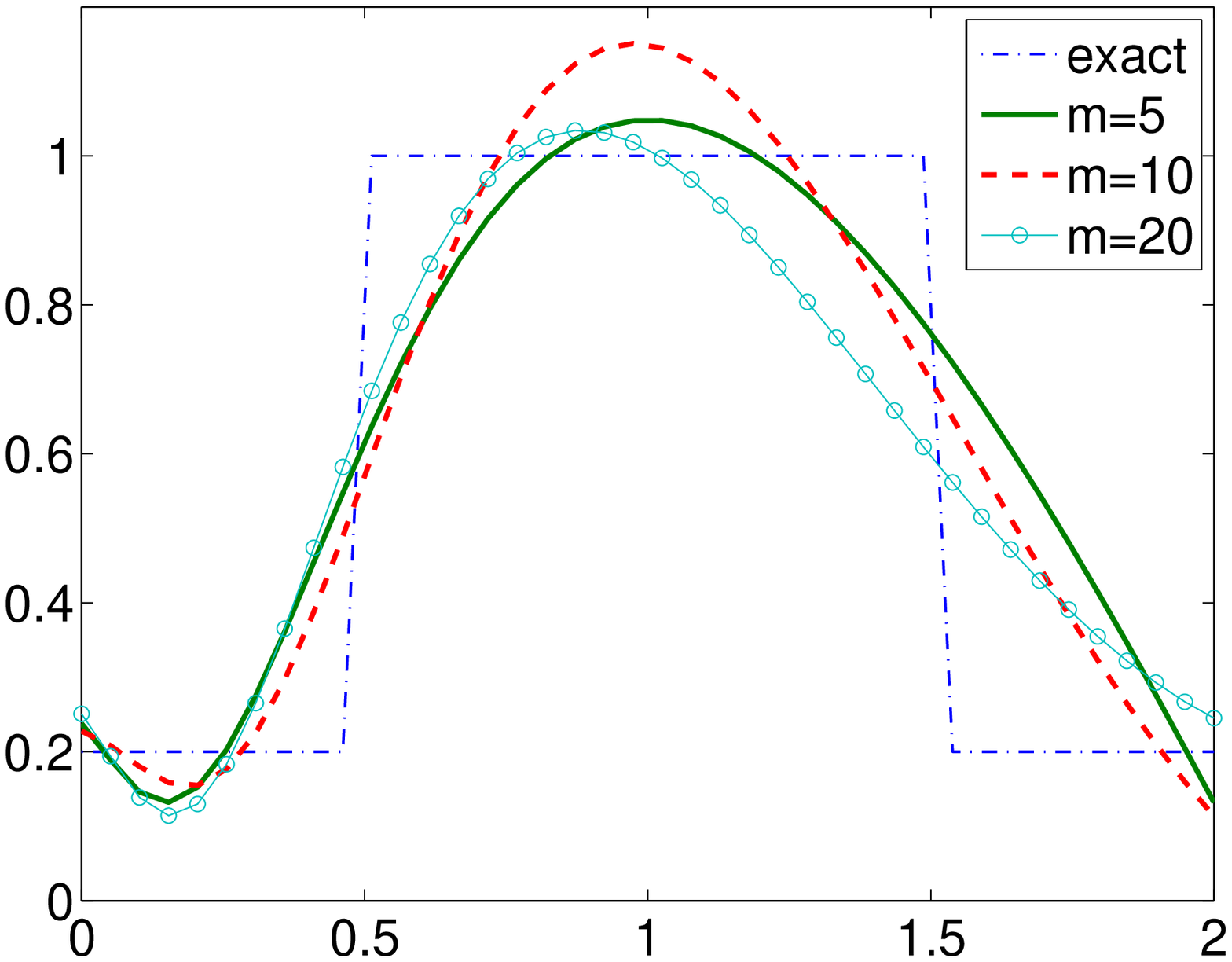}
\caption{Optimal reconstruction for the model functions $f_2$ and $f_3$. The
number of underground layers is $n=40$, the noise level is $\tau=10^{-3}$. The
solid line is the solution obtained with $m=5$, the dashed line corresponds to
$m=10$, the line with bullets to $m=20$. The exact solution is represented by a
dash-dotted line.}
\label{fig:best}
\end{figure}

It is clear that the TSVD approach is the least accurate. The TGSVD with
$L=D_2$ gives the best results for $f_1$, that is when the solution is smooth.
When the conductivity distribution is less regular, like $f_2$ and $f_3$,
the first derivative $L=D_1$ produces the more accurate approximations.
From the results, it seems convenient to use a large number of layers to
discretize the soil, that is $n=40$. This choice does not increase
significantly the computation time.
It is obviously desirable to have at disposal a large number of measurements,
however the results obtained with $m=5$ and $m=10$ are not much worse than
those computed with $m=20$, and they might be sufficient to give a rough
approximation of the depth localization of a conductive substance.
This is an important remark, as it reduces the time needed for data
acquisition.

Figure~\ref{fig:best} gives an idea of the quality of the computed
reconstructions for the model functions $f_2$ and $f_3$, with $n=40$ and 
noise level $\tau=10^{-3}$. The exact solution is compared to the
approximations corresponing to $m=5,10,20$.
The above remarks about the influence of the number of measurements $m$ is
confirmed. It is also remarkable that the position of the maximum is very well
localized.

\begin{table}
\caption{Optimal error $e_{\text{opt}}$ for $m=5,10,20$ and $n=20,40$, for 
$f_1$ $(L=D_2)$, $f_2$ $(L=D_1)$, and $f_3$ $(L=D_1)$. The results obtained
from measurements collected with the instrument in both vertical and horizontal
orientation are compared to those obtained with a single orientation.}
\label{tab:example5}
\begin{center}
\begin{tabular}{cc|cc|cc|cc}
& \multicolumn{1}{c}{} & \multicolumn{2}{c}{$f_1$, $L=D_2$} 
	& \multicolumn{2}{c}{$f_2$, $L=D_1$} 
	& \multicolumn{2}{c}{$f_3$, $L=D_1$} \\
orientation & \multicolumn{1}{c}{$m$} & $n=20$ & \multicolumn{1}{c}{$n=40$} 
	& $n=20$ & \multicolumn{1}{c}{$n=40$} 
	& $n=20$ & \multicolumn{1}{c}{$n=40$} \\
\hline
           &   5 & 6.9e-02 & 7.0e-02 & 7.2e-02 & 6.4e-02 & 2.9e-01 & 2.9e-01 \\
both       &  10 & 5.2e-02 & 4.6e-02 & 6.3e-02 & 6.2e-02 & 2.7e-01 & 2.6e-01 \\
           &  20 & 3.1e-02 & 3.5e-02 & 6.5e-02 & 5.9e-02 & 2.6e-01 & 2.6e-01 \\
\hline 
           &   5 & 1.4e-01 & 1.0e-01 & 1.8e-01 & 1.8e-01 & 3.7e-01 & 3.7e-01 \\
vertical   &  10 & 7.0e-02 & 1.2e-01 & 1.4e-01 & 1.4e-01 & 3.8e-01 & 3.5e-01 \\
           &  20 & 7.5e-02 & 7.5e-02 & 1.2e-01 & 1.1e-01 & 3.3e-01 & 3.3e-01 \\
\hline 
           &   5 & 1.3e-01 & 1.3e-01 & 2.7e-01 & 2.6e-01 & 4.4e-01 & 4.1e-01 \\
horizontal &  10 & 8.4e-02 & 6.1e-02 & 1.4e-01 & 1.2e-01 & 3.8e-01 & 4.0e-01 \\
           &  20 & 7.2e-02 & 6.7e-02 & 1.1e-01 & 8.6e-02 & 3.5e-01 & 3.4e-01 \\
\hline 
\end{tabular}
\end{center}
\end{table}

In the previous experiments we assumed that all the $2m$ entries of vector
$\bm{b}$ in \eqref{bmsigma} were available.
In Table~\ref{tab:example5} we compare these results with those obtained by
using only half of them, i.e., those corresponding to either the vertical or
horizontal orientation of the instrument.
The results with the label ``both'' in the first column are extracted from
Table~\ref{tab:example2}.
The results are slightly worse when the number of data is halved, especially
for the smooth model function, while they are almost equivalent for the step
function $f_3$.

In Section~\ref{sec:jacob} we described the computation of the Jacobian matrix
of \eqref{rsigma}, and compared it to the slower finite difference
approximation \eqref{findiff} and to the Broyden update \eqref{broyden}.
To investigate the execution time corresponding to each method, we let the
method \eqref{regdampedGN} perform 100 iterations, with $L=D_2$, for a fixed
regularization parameter ($\ell=4$). When the Jacobian is exactly computed, the
execution time is 7.18s, while the finite difference approximation requires
18.96s. The speedup factor is 2.6, which is far less than the one theoretically
expected. This is probably due to the implementation details, and to the fact
that the Matlab programming language is interpreted. We performed the same
experiment by applying the Broyden update \eqref{broyden} and recomputing the
Jacobian every $k_B$ iterations. For $k_B=5$ the execution time was 2.00s,
for $k_B=10$, 1.32s. Despite this strong speedup, the accuracy is not
substantially affected by this approach. Table~\ref{tab:example2broyden}
reports the relative error $e_{\text{opt}}$ obtained by repeating the
experiment of Table~\ref{tab:example2} using the Broyden method with $k_B=10$.
We only report the values of $e_{\text{opt}}$ for the most interesting
examples. The loss of accuracy is minimal.

\begin{table}
\caption{Optimal error $e_{\text{opt}}$ for $m=5,10,20$ and $n=20,40$, for 
$f_1$ $(L=D_2)$, $f_2$ $(L=D_1)$, and $f_3$ $(L=D_1)$. The Jacobian is computed
every 10 iterations and then updated by the Broyden method.}
\label{tab:example2broyden}
\begin{center}
\begin{tabular}{c|cc|cc|cc}
\multicolumn{1}{c}{} & \multicolumn{2}{c}{$f_1$, $L=D_2$} 
	& \multicolumn{2}{c}{$f_2$, $L=D_1$} 
	& \multicolumn{2}{c}{$f_3$, $L=D_1$} \\
\multicolumn{1}{c}{$m$} & $n=20$ & \multicolumn{1}{c}{$n=40$} 
	& $n=20$ & \multicolumn{1}{c}{$n=40$} 
	& $n=20$ & \multicolumn{1}{c}{$n=40$} \\
\hline 
 5 & 7.3e-02 & 7.6e-02 & 7.7e-02 & 7.6e-02 & 3.0e-01 & 2.9e-01 \\
10 & 5.5e-02 & 4.8e-02 & 6.9e-02 & 7.4e-02 & 2.7e-01 & 2.8e-01 \\
20 & 4.3e-02 & 4.0e-02 & 7.3e-02 & 6.9e-02 & 2.6e-01 & 2.7e-01 \\
\hline
\end{tabular}
\end{center}
\end{table}

\begin{figure}[htb]
\centering
\includegraphics[width=\textwidth]{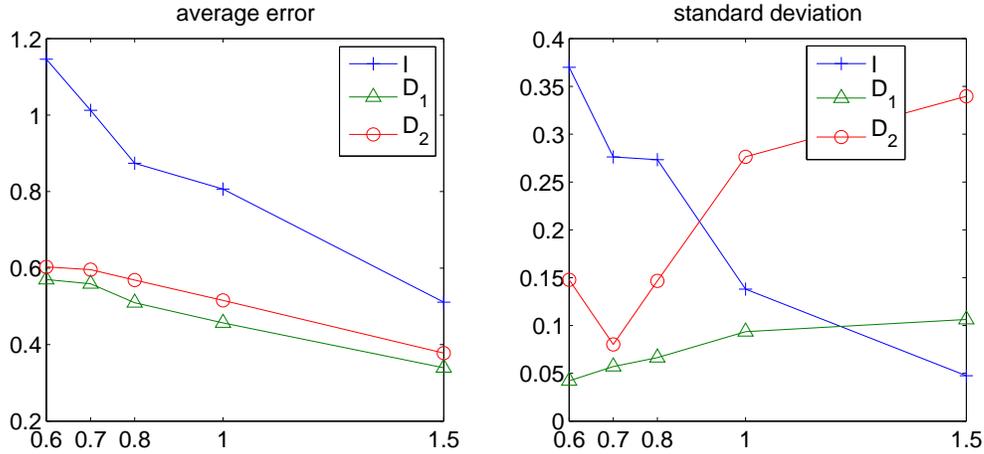}
\caption{Results for the reconstruction of test function $f_3$ with a variable
step length $\xi$, which is reported on the horizontal axis. The left graph
reports the average error $e_{\text{opt}}$, obtained with three regularization
matrices $L=I,D_1,D_2$. Each test is repeated 20 times for each noise level
$\tau=10^{-3},10^{-2},10^{-1}$. The right graph reports the corresponding
standard deviations.}
\label{fig:meanstd}
\end{figure}

Another interesting issue is understanding which is the spatial resolutions of
the inversion algorithm, that is, which is the performance of the method in the
presence of a very thin conductive layer. To this end, we consider the test
function $f_3$, and let the length $\xi$ of the step vary. Each problem is
solved for three regolarization matrices, three noise levels, and each test is
repeated 20 times for different noise realizations. The left graph of
Figure~\ref{fig:meanstd} reports the average errors for each value of $\xi$,
while the right graph displays the standard deviations. The choice $L=D_1$
appears to be the best. Indeed, not only the errors are better, but the smaller
standard deviations ensure that the method is more reliable.
Figure~\ref{fig:gradino} shows the reconstructions of $f_3$ with three
different step lengths, with $\xi=1.5,1.0,0.7$, $L=D_1$, and $\tau=10^{-2}$.
It is remarkable that the position of the maximum is well located by the
algorithm even in the presence of a very thin step.

\begin{figure}[htb]
\centering
\includegraphics[width=\textwidth]{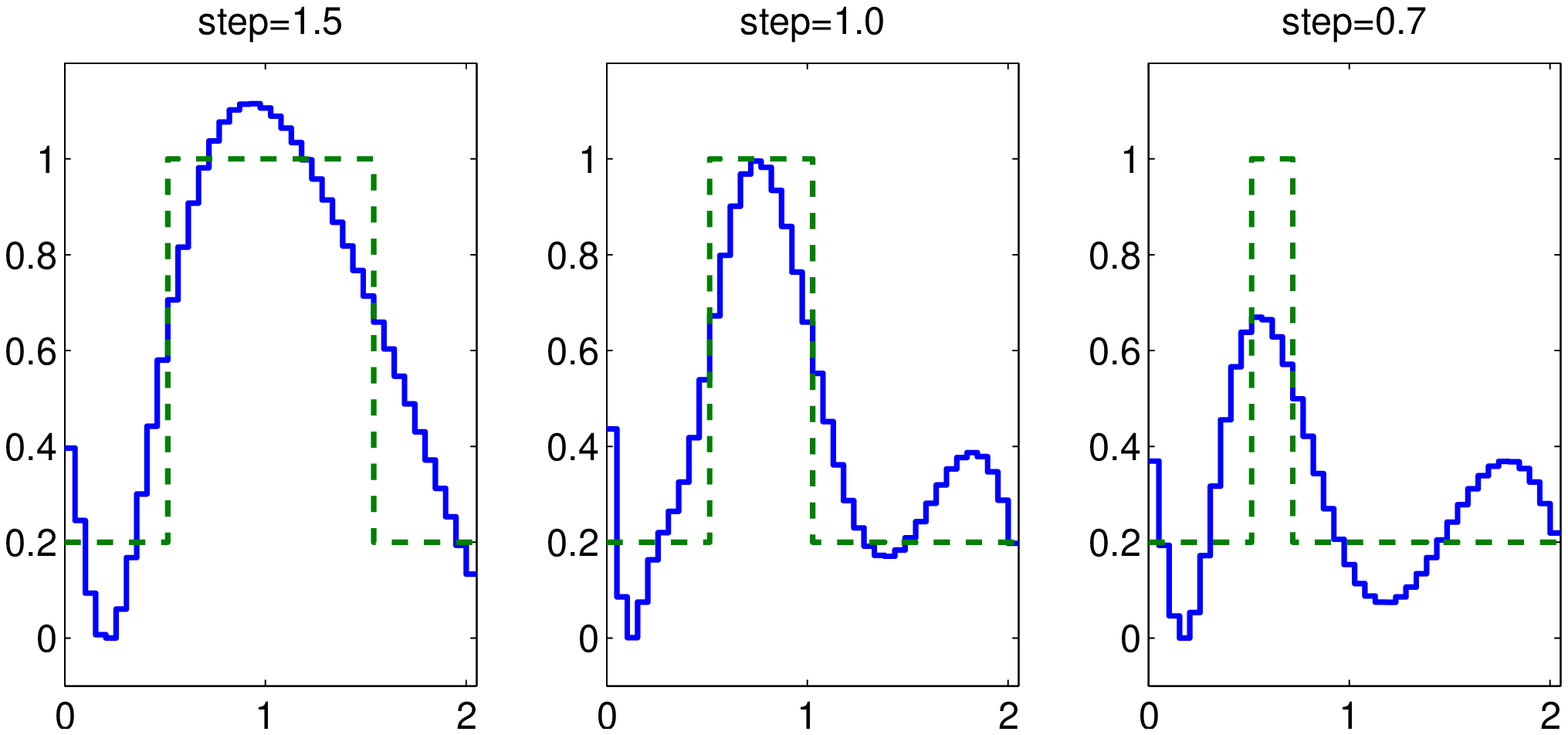}
\caption{Optimal reconstructions for the test function $f_3$, with step lengths
1.5, 1.0, and 0.7, obtained with $L=D_1$ and noise level $\tau=10^{-2}$.}
\label{fig:gradino}
\end{figure}



\end{document}